\newtheorem{Thm}{Theorem}[section]
\newtheorem{Cor}{Corollary}[section]
\newtheorem{Lem}{Lemma}[section]
\newtheorem{Rek}{Remark}[section]
\newcommand{\R}{\mathbb{R}}
\numberwithin{equation}{section} \numberwithin{figure}{section}
\newenvironment{proof}{\medskip\par\noindent{\bf Proof\/}:}{\qquad
\raisebox{-0.5mm}{\rule{1.5mm}{1mm}}\vspace{6pt}}
\begin{document}
\title{\Large\bf Existence and concentration of solution for Schr\"odinger-Poisson system with local potential}
\author{
{Zhipeng Yang$^{1}$}\thanks{zhipeng.yang@mathematik.uni-goettingen.de},~~
{Yuanyang Yu$^{2,3}$}\thanks{Corresponding author: yuyuanyang18@mails.ucas.ac.cn}\\
\small Mathematical Institute, Georg-August-University of G\"ottingen, G\"ottingen 37073, Germany.$^{1}$\\
\small Institute of Mathematics, Academy of Mathematics and Systems Science\\
\small Chinese Academy of Sciences, Beijing 100190,  P.R.China$^{2}$\\
\small University of Chinese Academy of Sciences, Beijing 100049,  P.R.China$^{3}$\\
}

\date{}

\date{} \maketitle

\noindent{\bf Abstract:}
In this paper, we study the following nonlinear Schr\"odinger-Poisson type equation
\begin{equation*}
\begin{cases}
-\varepsilon^2\Delta u+V(x)u+K(x)\phi u=f(u)&\text{in}\ \mathbb{R}^3,\\
-\varepsilon^2\Delta \phi=K(x)u^2&\text{in}\ \mathbb{R}^3,
\end{cases}
\end{equation*}
where $\varepsilon>0$ is a small parameter, $V: \mathbb{R}^3\rightarrow \mathbb{R}$ is a continuous potential and  $K: \mathbb{R}^3\rightarrow \mathbb{R}$ is used to describe the electron charge. Under suitable assumptions on $V(x), K(x)$ and $f$, we prove existence and concentration properties of ground state solutions for $\varepsilon>0$ small. Moreover, we summarize some open problems for the Schr\"odinger-Poisson system.
\par
\vspace{6mm} \noindent{\bf Keywords:} Ground state solution, concentration, Schr\"odinger-Poisson system.

\vspace{1mm} \noindent{\bf AMS subject classification:} 35A15, 35B40, 35J20.

\section{Introduction and main results}
In this paper, we consider the following Schr\"odinger-Poisson system
\begin{equation}\label{eq1.1}
\begin{cases}
-\varepsilon^2\Delta u+V(x)u+K(x)\phi u=f(u)&\text{in}\ \mathbb{R}^3,\\
-\varepsilon^2\Delta \phi=K(x)u^2&\text{in}\ \mathbb{R}^3,
\end{cases}
\end{equation}
where $\varepsilon>0$ is a small parameter, $V(x)$ is a potential function and $K(x)$ is used to describe the electron charge.
We are interested in the existence of positive ground state solutions for competitive relationship between $V(x)$ and $K(x)$, and their asymptotical behavior as $\varepsilon\to0.$ This problem arises when one
studies the interaction between an unknown electromagnetic field and the nonlinear Schr\"odinger field
\begin{equation}\label{eq1.2}
\begin{cases}
i\hbar\frac{\partial \Psi(x,t)}{\partial t}=-\frac{\hbar^2}{2m}\Delta\Psi+\widetilde{V}(x)\Psi+\phi\Psi-f(|\Psi|)& \text{in}\  \mathbb{R}^3\times\mathbb{R},\\
-\Delta\phi=|\Psi|^2& \text{in}\ \mathbb{R}^3,
\end{cases}
\end{equation}
where $i$ is the imaginary unit, $\hbar$ is the Planck constant, $m$ is the mass of the field $\Psi$, $\widetilde{V}(x)$ is a given external potential and $f(exp(i\theta)\xi)=exp(i\theta)f(\xi)$ for $\theta,\xi\in\mathbb{R}$ is a nonlinear function which describes the interaction among many particles.
\par
In recent years, a great deal of work has been devoted to the study of standing waves, i.e.
\[\Psi(x,t)=e^{-iEt}u(x),\]
which leads to the stationary Schr\"odinger-Poisson system \eqref{eq1.1} with $\varepsilon^2=\frac{\hbar^2}{2m}$, $\widetilde{V}(x)=V(x)-E$. When $\varepsilon=V(x)=K(x)=1$, \eqref{eq1.1} was first proposed by Benci and Fortunato \cite{Benci-Fortunato1998TMNA} in 1998 on a bounded domain, and is related to the Hartree equation (\cite{Lions1987}). Then, they \cite{BenciNA01,BenciRMY02} continued to study the model
\begin{equation*}
\begin{cases}
-\Delta u+u+\phi u=|u|^{p-2}u& \text{in}\ \mathbb{R}^3,\\
-\Delta \phi=u^2& \text{in}\ \mathbb{R}^3,
\end{cases}
\end{equation*}
for $4<p<6$.
D'Aprile and Mugnai \cite{DAprile-Mugnai2004PRSESA} studied the existence of radially symmetric solitary waves with $4 \leq p<6$ and obtained solutions as Mountain-Pass critical points for the associated energy functional. Azzollini et al. \cite{Azzollini-d'Avenia-Pomponio2010AIHPAN} proved the existence of nontrivial solutions by supposing that the nonlinearity satisfies Berestycki-Lions type assumptions. Ruiz \cite{Ruiz2006}, Ambrosetti and Ruiz \cite{Ambrosetti-Ruiz2008CCM} considered the system with parameter, that is
\begin{equation*}
\begin{cases}
-\Delta u+u+\mu \phi u=|u|^{p-2} u& \text{in}\ \mathbb{R}^3,\\
-\Delta \phi=4 \pi u^{2}& \text{in}\ \mathbb{R}^3,
\end{cases}
\end{equation*}
By working in the radial functions subspace of $H^{1}\left(\mathbb{R}^{3}\right),$ they obtained the existence and multiplicity results depending on the parameter $p$ and $\mu>0$.
\par

If the potential $V(x)$ is not a constant, there are also some works about Schr\"odinger-Poisson system
\begin{equation}\label{eq1.3}
\begin{cases}
-\Delta u+V(x)u+\phi u=f(u)&\text{in}\ \mathbb{R}^3,\\
-\Delta \phi=u^2&\text{in}\ \mathbb{R}^3.
\end{cases}
\end{equation}
Since $V(x)$ might not be radial, one cannot work in the radial functions space directly, then we must look for some conditions on $V(x)$ to overcome the lack of compactness. Wang and Zhou \cite{WangZhou2007} assumed that
the potential $V$ satisfies the global condition
\begin{itemize}
\item[$(V)$] $0<\inf\limits_{x\in\R^N}V(x)<\liminf\limits_{|x|\rightarrow\infty}V(x)=V_{\infty}$,
\end{itemize}
which was introduced by Rabinowitz \cite{Rabinowitz1992},
they got the existence and non-existence of positive solutions for the problem \eqref{eq1.3} with asymptotically nonlinearities.
Jiang and Zhou \cite{Jiang-Zhou11JDE} first applied the steep potential well conditions to Schr\"{o}dinger-Poisson system, and proved the existence of solutions. Moreover, they also studied the asymptotic behavior of solutions by combining domains approximation with priori estimates. Later, Zhao et al. \cite{Zhao-Liu-Zhao2013JDE} considered a case allowing the potential $V$ change sign. Using variational setting of \cite{Ding-Wei07JFA}, they obtained the existence and asymptotic behavior of nontrivial solutions for $p\in(3,6)$.
If the electronic potential $K(x)$ is not a constant, Ambrosetti \cite{Ambrosetti2008MJM} studied the existence with a parameter in front of the nonlocal term. Cerami and Varia \cite{Cerami-Vaira2010JDE} studied
\begin{equation*}
\begin{cases}
-\Delta u+u+K(x) \phi u=a(x)|u|^{p-2} u& \text{in}\ \mathbb{R}^3,\\
-\Delta \phi=K(x) u^{2}& \text{in}\ \mathbb{R}^3.
\end{cases}
\end{equation*}
Under suitable assumptions on $K(x)$ and $a(x)$, the authors proved the existence of positive solutions. If the
potential is radial symmetry and has asymptotical behavior at infinity, Li et al. \cite{LiPengYan10CCM} studied the existence of infinitely many sign-changing solutions. For other related nonlocal variational problems, we refer the interested readers to see \cite{Azzollini2010JDE,Chen20JDE,jinyang,LiLiShi17CV,LiuMosconiJDE20,Schaftingen16CV,SunWu20JDE,yang19,yang21} and the references therein.

\par
If we consider the case that the parameter $\varepsilon$ goes to zero, the problem is used to describe the transition between the Quantum Mechanics and the Classical Mechanics. The study of a single Schr\"odinger equation
\[
-\varepsilon^{2} \Delta u+V(x) u=g(u), \text { in } \mathbb{R}^{N}
\]
goes back to the pioneer work \cite{Floer86JFA} by Floer and Weinstein. In \cite{D'Aprile-Wei2005SIAMJMA}, D'Aprile and Wei considered
\begin{equation}\label{eq1.4}
\begin{cases}
-\varepsilon^{2} \Delta u+V(x) u+\phi u=|u|^{p-2} u& \text{in}\ \mathbb{R}^{3},\\
-\Delta \phi=u^2& \text{in}\ \mathbb{R}^{3},
\end{cases}
\end{equation}
for $p \in\left(1, \frac{11}{7}\right)$, they constructed a family of positive radially symmetric solutions concentrating around a sphere when the potential is constant.
When the potential $V(x)$ satisfies the global condition $(V)$, He \cite{He2011zamp} studied the multiplicity of positive solutions and proved that these positive solutions concentrate around the global minimum of the potential $V$.  Wang et al. \cite{Wang13CV} studied the existence  and the concentration behavior of ground state solutions for a subcritical problem with competing potentials. The critical case was considered in \cite{He-Zou2012JMP}, He and Zou proved that system \eqref{eq1.4} possesses a positive ground state solution which concentrate around the global minimum of $V$.
For other related results, we may refer the readers to \cite{Ruiz05MMM,YangNA20,YangDing09NA,Zhang14JMP,ZhangXia18JDE} for recent progress.
\par
Motivated by the results mentioned above, the aim of this paper is to continue to study the existence and
concentration of solutions for the Schr\"odinger-Poisson system. In fact, we are interested in the following
Schr\"odinger-Poisson system
\begin{equation*}
\begin{cases}
-\varepsilon^2\Delta u+V(x)u+K(x)\phi u=f(u)&\text{in}\ \mathbb{R}^3,\\
-\varepsilon^2\Delta \phi=K(x)u^2&\text{in}\ \mathbb{R}^3,
\end{cases}
\end{equation*}
where $\varepsilon>0$ is a small parameter, $V: \mathbb{R}^3\rightarrow \mathbb{R}$ is a continuous potential and  $K: \mathbb{R}^3\rightarrow \mathbb{R}$ is used to describe the electron charge which satisfy:
\begin{itemize}
\item[$(V_1)$] $V\in C(\R^3,\R)$ and $\inf\limits_{x\in\R^3}V(x)=V_1>0$.
\item[$(V_2)$] There is an open and bounded domain $\Lambda$ such that
\begin{equation*}
0<V_0:=\inf_{\Lambda}V(x)<\min_{\partial\Lambda}V(x).
\end{equation*}
Without loss of generality we assume $0\in \Lambda$. Under the assumption
\item[$(K)$] $K$ is continuous and bounded with $K(x)\geq 0, K\not\equiv 0,~~K(x)=0~\text{if}~x\in \mathcal{M}$,
where we assume that $\mathcal{M}=\{x\in\Lambda: V(x)=V_0\}\neq\emptyset$ and $V(0)=\min\limits_{x\in \Lambda}V(x)=V_0$.
\end{itemize}

For the nonlinear term, we assume:
\begin{itemize}
\item[$(f_1)$]  $f\in C(\mathbb{R},\mathbb{R}),f(t)=o(t^3)$ as $t\rightarrow 0$ and $f(t)=0$ for all $t\leq 0$;
\item[$(f_2)$]  there is $4<p<6$ such that
\begin{equation*}
|f(t)|\leq c_1(1+|t|^{p-1})
\end{equation*}
for all $t\in \mathbb{R}$ and some $c_1>0$;
\item[$(f_3)$]  there is a constant $\mu>4$ such that
\begin{equation*}
0<\mu F(t)\leq f(t)t,~\forall>0,
\end{equation*}
where $F(t)=\int_0^tf(\tau)d\tau$;
\item[$(f_4)$]  The function $\frac{f(t)}{t^3}$ is strictly increasing for $t>0$.
\end{itemize}
\begin{Rek}\label{Rek1.1}
Note that, $(f_1)$ and $(f_2)$ imply that for each $\epsilon>0$, there is $C_\epsilon>0$ such that
\begin{equation*}
f(t)\leq \epsilon t+C_\epsilon t^{p-1}~~~\text{and}~~~F(t)\leq\epsilon t^2+C_\epsilon t^p,~~\forall t\geq 0.
\end{equation*}
By $(f_3)$, we deduce that
\begin{equation*}
F(t)>0~~~\text{and}~~~\frac{1}{4}f(t)t^2-F(t)>0,~~\forall t>0.
\end{equation*}
Moreover, it follows from $(f_4)$ that there exist $C_1,C_2>0$ such that
\begin{equation*}
F(t)\geq C_1t^\mu-C_2,~\forall t\geq 0.
\end{equation*}
\end{Rek}
\par
In this case, we want to study how the behavior of the potential function and electric field will affect
the existence and concentration of the ground state solutions. Now we state our main results as follows.

\begin{Thm}\label{Thm1.1}
 Assume that $(V_1)$, $(V_2)$, $(K)$ and $(f_1)$-$(f_4)$ hold. Then for any $\varepsilon>0$ small:
\begin{itemize}
\item[$(i)$] The system \eqref{eq1.1} has a ground state solution $\omega_\varepsilon$;
\item[$(ii)$]   $\omega_\varepsilon$ possesses a global maximum point $x_\varepsilon$ such that
\begin{equation*}
\lim_{\varepsilon\rightarrow 0}V(x_\varepsilon)=V_0;
\end{equation*}
\item[$(iii)$] Setting $v_\varepsilon(x):=\omega_\varepsilon(\varepsilon x+x_\varepsilon)$, then $v_\varepsilon$ converges to a ground state solution of
\begin{equation*}
-\Delta u+V_0u=f(u),\quad \text{in}~\mathbb{R}^3;
\end{equation*}
\item[$(iv)$] There are positive constants $C_1,C_2$ independent of $\varepsilon$ such that
\begin{equation*}
\omega_\varepsilon(x)\leq Ce^{-\frac{c}{\varepsilon}|x-x_\varepsilon|},~\forall x\in \mathbb{R}^3.
\end{equation*}
\end{itemize}
\end{Thm}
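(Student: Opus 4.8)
The plan is to adapt the penalization scheme of del Pino and Felmer to the present nonlocal setting. First I would eliminate the Poisson component: for fixed $u\in H^1(\mathbb{R}^3)$ the second equation has, by Lax--Milgram, a unique solution $\phi_{\varepsilon,u}\in D^{1,2}(\mathbb{R}^3)$ given by convolution with the Newtonian kernel, and $\phi_{\varepsilon,u}\ge 0$ and is bounded whenever $K\ge 0$. Substituting it back reduces \eqref{eq1.1} to a single nonlocal equation whose weak solutions are critical points of a functional on $H^1(\mathbb{R}^3)$. The change of variables $v(x)=u(\varepsilon x)$ then transforms the equation into $-\Delta v+V(\varepsilon x)v+K(\varepsilon x)\psi_v v=f(v)$ with $-\Delta\psi_v=K(\varepsilon x)v^2$, removing the singular parameter in front of the Laplacian at the cost of slowly varying coefficients. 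I would next introduce the penalized nonlinearity: fix a constant $k>2$ and the unique $a>0$ with $f(a)/a=V_1/k$ (such $a$ exists because $(f_4)$ makes $f(t)/t$ strictly increasing from $0$), set $\tilde f(t)=f(t)$ for $t\le a$ and $\tilde f(t)=(V_1/k)t$ for $t>a$, and define $g(x,t)=\chi_{\Lambda_\varepsilon}(x)f(t)+(1-\chi_{\Lambda_\varepsilon}(x))\tilde f(t)$, where $\Lambda_\varepsilon=\{x:\varepsilon x\in\Lambda\}$.

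The associated penalized functional $J_\varepsilon$ has the mountain-pass geometry, and I would verify that it satisfies the Palais--Smale condition. Boundedness of $(PS)$ sequences uses the Ambrosetti--Rabinowitz condition $(f_3)$ with $\mu>4$, which is precisely what is needed to dominate the quartic nonlocal term; the penalization makes the nonlinearity sublinear outside $\Lambda_\varepsilon$, recovering the compactness that the local condition $(V_2)$ alone does not provide. Since $(f_4)$ guarantees a well-defined Nehari manifold on which the mountain-pass level coincides with the least-energy level, a Nehari/mountain-pass argument yields a positive critical point $v_\varepsilon$ of $J_\varepsilon$ at the least-energy level $c_\varepsilon$, establishing existence for the penalized problem.

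The crux of the argument, and the step I expect to be hardest, is to show that for $\varepsilon$ small the penalized solution is a genuine solution of \eqref{eq1.1}, together with the concentration statement. The mechanism is an energy comparison: using test functions supported near a point $x_0\in\mathcal{M}$, where both $V=V_0$ and, crucially by $(K)$, $K=0$, one proves $\limsup_{\varepsilon\to0}c_\varepsilon\le c_0$, where $c_0$ is the ground-state level of the autonomous limit problem $-\Delta u+V_0 u=f(u)$; the vanishing of $K$ on $\mathcal{M}$ ensures the nonlocal term contributes nothing to the limit energy, so the limit problem is the purely local one. A blow-up analysis of the rescaled functions $v_\varepsilon(\cdot+y_\varepsilon)$, centred at suitable maximum points $y_\varepsilon$, then shows that up to translation $v_\varepsilon$ converges in $H^1(\mathbb{R}^3)$ to a ground state $w$ of the limit equation and that the concentration points lie asymptotically in $\mathcal{M}$, giving $V(x_\varepsilon)\to V_0$ after undoing the scaling. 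Uniform $L^\infty$ bounds obtained by Moser iteration (the nonlocal term being a bounded lower-order perturbation since $\phi_{\varepsilon,u}\in L^\infty$), combined with the decay of $v_\varepsilon$ away from its concentration point, force $v_\varepsilon\le a$ outside $\Lambda_\varepsilon$ for $\varepsilon$ small; hence $g(x,v_\varepsilon)=f(v_\varepsilon)$ everywhere, the penalization is inactive, and $v_\varepsilon$ (equivalently $\omega_\varepsilon$) solves \eqref{eq1.1} as a ground state, proving $(i)$--$(iii)$.

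Finally, for the exponential decay $(iv)$ I would argue by comparison. Away from the concentration point $v_\varepsilon$ is small, so by $(f_1)$ one has $f(v_\varepsilon)\le\tfrac{V_1}{2}v_\varepsilon$ there; since $V(\varepsilon x)\ge V_1$ and $K(\varepsilon x)\psi_{v_\varepsilon}v_\varepsilon\ge0$, the equation yields $-\Delta v_\varepsilon+\tfrac{V_1}{2}v_\varepsilon\le0$ for $|x-y_\varepsilon|$ large. A Kato-type inequality together with the comparison function $e^{-c|x-y_\varepsilon|}$, for any $c<\sqrt{V_1/2}$, gives a uniform bound $v_\varepsilon(x)\le Ce^{-c|x-y_\varepsilon|}$; undoing the scaling via $\omega_\varepsilon(y)=v_\varepsilon((y-x_\varepsilon)/\varepsilon)$ produces the stated estimate $\omega_\varepsilon(x)\le Ce^{-\frac{c}{\varepsilon}|x-x_\varepsilon|}$, the nonlocal term only aiding the decay since $K(x)\phi_{\varepsilon,u}u\ge0$.
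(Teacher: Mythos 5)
Your proposal follows essentially the same route as the paper: reduction of the Poisson equation via Lax--Milgram, the rescaling $x\mapsto\varepsilon x$, the del Pino--Felmer penalization with threshold $a$ defined by $f(a)/a$ proportional to the infimum of $V$, mountain-pass existence plus the Palais--Smale property for the penalized functional, the energy comparison $\limsup_{\varepsilon\to0}c_\varepsilon\le c_{V_0}$ exploiting that $K$ vanishes on $\mathcal{M}$ so the limit problem is the purely local one, concentration analysis forcing the limit point into $\Lambda$ with $V=V_0$, Moser iteration and uniform decay to deactivate the penalization, and a comparison argument for the exponential estimate. The only (harmless, arguably safer) deviation is your use of $V_1=\inf_{\mathbb{R}^3}V$ rather than $V_0=\inf_\Lambda V$ in the penalization threshold.
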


\begin{Cor}\label{Cor1.1}
 Assume that $(V_1)$ holds. If there exist multiple disjoint bounded domains $\Lambda_j,j = 1,\cdots,k$ and constants $c_1 < c_2 <\cdots< c_k$ such that
\begin{equation*}
c_j:=\inf_{\Lambda_j}V(x)<\min_{\partial \Lambda_j}V(x).
\end{equation*}
Then for any $\varepsilon>0$ small:
\begin{itemize}
\item[$(i)$] The system \eqref{eq1.1} has a ground state solution $\omega_\varepsilon^j$;
\item[$(ii)$]   $\omega_\varepsilon^j$ possesses a global maximum point $x_\varepsilon^j$ such that
\begin{equation*}
\lim_{\varepsilon\rightarrow 0}V(x_\varepsilon^j)=c_j;
\end{equation*}
\item[$(iii)$] Setting $v_\varepsilon^j(x):=\omega_\varepsilon^j(\varepsilon x+x_\varepsilon)$, then $v_\varepsilon^j$ converges in to a ground state solution of
\begin{equation*}
-\Delta u+c_ju=f(u),\quad \text{in}~\mathbb{R}^3;
\end{equation*}
\item[$(iv)$] There are positive constants $C_1,C_2$ independent of $\varepsilon$ such that
\begin{equation*}
\omega_\varepsilon^j(x)\leq Ce^{-\frac{c}{\varepsilon}|x-x_\varepsilon^j|},~\forall x\in \mathbb{R}^3.
\end{equation*}
\end{itemize}
\end{Cor}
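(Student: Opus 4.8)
My plan is to deduce the Corollary from $k$ independent applications of Theorem~\ref{Thm1.1}, one for each well $\Lambda_j$. The decisive structural fact I would rely on is that the proof of Theorem~\ref{Thm1.1} rests on a del Pino--Felmer type penalization: the nonlinearity is modified outside the well $\Lambda$ so that the associated variational problem becomes essentially local, and the ground states of the penalized problem are then shown to concentrate inside $\Lambda$ and hence to solve the original system \eqref{eq1.1} for $\varepsilon$ small. Since this construction never uses the behaviour of $V$ away from the chosen well, I expect the presence of several disjoint wells to cause no interference.

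First I would fix $j\in\{1,\dots,k\}$. Because the domains $\Lambda_1,\dots,\Lambda_k$ are bounded, open and pairwise disjoint, I can enclose $\overline{\Lambda_j}$ in a slightly larger bounded open set disjoint from the remaining $\overline{\Lambda_i}$, so that the local hypotheses of Theorem~\ref{Thm1.1} hold with $\Lambda$ replaced by $\Lambda_j$, $V_0$ replaced by $c_j$, and $\mathcal{M}$ replaced by $\mathcal{M}_j:=\{x\in\Lambda_j:V(x)=c_j\}$. The inequality $c_j=\inf_{\Lambda_j}V<\min_{\partial\Lambda_j}V$ is exactly $(V_2)$ for this well, while $(V_1)$, the vanishing of $K$ on $\mathcal{M}_j$, and $(f_1)$--$(f_4)$ are inherited unchanged. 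Applying Theorem~\ref{Thm1.1} to this localized data would then yield, for all sufficiently small $\varepsilon>0$, a ground state solution $\omega_\varepsilon^j$ of \eqref{eq1.1} with a global maximum point $x_\varepsilon^j$ satisfying $V(x_\varepsilon^j)\to c_j$, the convergence of the rescaling $v_\varepsilon^j(x)=\omega_\varepsilon^j(\varepsilon x+x_\varepsilon^j)$ to a ground state of $-\Delta u+c_j u=f(u)$ in $\R^3$, and the exponential decay estimate of part $(iv)$. Running this for each $j$ would produce the full family $\{\omega_\varepsilon^j\}_{j=1}^{k}$.

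The step I expect to be the main obstacle is the localization itself: I would need to confirm that the penalized least-energy level attached to $\Lambda_j$ is realized by a function concentrating in $\Lambda_j$, rather than by one drifting toward another well or toward infinity. This is exactly what the penalization is designed to enforce, since the modified energy of any competitor concentrating outside $\Lambda_j$ should strictly exceed the penalized level $c_{\varepsilon,j}$; making this rigorous amounts to re-reading the penalization estimates of Theorem~\ref{Thm1.1} with $\Lambda_j$ in place of $\Lambda$ and checking that the disjointness of the wells, together with the strict ordering $c_1<\dots<c_k$, keeps the $k$ constructions from interacting. Once this is in place, the remaining assertions $(i)$--$(iv)$ follow verbatim from Theorem~\ref{Thm1.1}.
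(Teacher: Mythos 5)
Your proposal is correct and matches the paper's (implicit) argument exactly: the paper gives no separate proof of Corollary~\ref{Cor1.1}, intending it to follow by applying the penalization scheme of Theorem~\ref{Thm1.1} to each well $\Lambda_j$ in turn, with the disjointness of the $\Lambda_j$ (as noted in Remark~\ref{Rek1.2}) guaranteeing that the $k$ constructions do not interfere. Your one caveat --- that the penalized least-energy level attached to $\Lambda_j$ must be realized by a function concentrating inside $\Lambda_j$ --- is precisely what Lemmas~\ref{Lem3.7}--\ref{Lem4.1} establish with $\Lambda_j$, $c_j$ and $\mathcal{M}_j$ in place of $\Lambda$, $V_0$ and $\mathcal{M}$, so nothing further is needed.
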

\begin{Rek}\label{Rek1.2}
We remark here that in Corollary \ref{Cor1.1}, the solutions can be separated provided $\varepsilon$ is small since $\Lambda_j$ are mutually disjoint. Furthermore, if $c_1$ is a global minimum of $V(x)$, then Corollary \ref{Cor1.1} describes a multiple concentrating phenomenon.
\end{Rek}

\par
This paper is organized as follows. In section 2, we present some basic results which will be used later. In section 3, we prove the existence of ground state solutions for the modified problem for small $\varepsilon>0$. In section 4, we study the concentration phenomenon and complete the proof of Theorem \ref{Thm1.1}. Finally, we give some summaries and open problems in this direction.
\par
\vspace{3mm}
{\bf Notation.~}Throughout this paper, we make use of the following notations.
\begin{itemize}
\item[$\bullet$]  For any $R>0$ and for any $x\in \mathbb{R}^3$, $B_R(x)$ denotes the ball of radius $R$ centered at $x$;
\item[$\bullet$]  $\|\cdot\|_q$ denotes the usual norm of the space $L^q(\mathbb{R}^3),1\leq q\leq\infty$;
\item[$\bullet$]  $o_n(1)$ denotes $o_n(1)\rightarrow 0$ as $n\rightarrow\infty$;
\item[$\bullet$]  $C$ or $C_i(i=1,2,\cdots)$ are some positive constants may change from line to line.
\end{itemize}

\section{Variational setting and preliminary results}
Making the change of variable $x\mapsto \varepsilon x$, we can rewrite the equation \eqref{eq1.1} as the following equivalent form
\begin{equation}\label{eq2.1}
\begin{cases}
-\Delta u+V(\varepsilon x)u+K(\varepsilon x)\phi u=f(u)~&\text{in}~\mathbb{R}^3,\\
-\Delta \phi =K(\varepsilon x)u^2~&\text{in}~\mathbb{R}^3.
\end{cases}
\end{equation}
If $u$ is a solution of the equation \eqref{eq2.1}, then $v(x):=u(\frac{x}{\varepsilon})$ is a solution of the equation \eqref{eq1.1}. Thus, to study the equation \eqref{eq1.1}, it suffices to study the equation \eqref{eq2.1}.
In view of the presence of potential $V(x)$, we introduce the subspace
\begin{equation*}
H_\varepsilon=\bigg\{u\in H^1(\mathbb{R}^3):\int_{\mathbb{R}^3}V(\varepsilon x)u^2dx<\infty\bigg\},
\end{equation*}
which is a Hilbert space equipped with the inner product
\begin{equation*}
(u,v)_\varepsilon=\int_{\mathbb{R}^3}\nabla u\nabla vdx+\int_{\mathbb{R}^3}V(\varepsilon x)uvdx,
\end{equation*}
and the norm
\begin{equation*}
\|u\|_\varepsilon^2=\int_{\mathbb{R}^3}|\nabla u|^2dx+\int_{\mathbb{R}^3}V(\varepsilon x)u^2dx.
\end{equation*}
\par
As we know, system \eqref{eq2.1} is the Euler-Lagrange equation of the functional $J:H_\varepsilon\times \mathcal{D}^{1,2}(\mathbb{R}^3)\rightarrow\R$ defined by
\begin{equation*}
J(u,\phi)=\frac{1}{2}\|u\|_\varepsilon^2+\frac{1}{4}\int_{\mathbb{R}^3}|\nabla\phi|^2dx+\frac{1}{2}\int_{\mathbb{R}^3}K(x)\phi u^2dx-\int_{\mathbb{R}^3}F(u)dx.
\end{equation*}
\par
It is easy to see that $J$ exhibits a strong indefiniteness, namely it is unbounded both from below and from above on infinitely dimensional subspaces. This indefiniteness can be removed using the reduction method described in \cite{Benci-Fortunato1998TMNA}. Recall that by the Lax-Milgram theorem, we know that for every $u\in H^1(\mathbb{R}^3)$, there exists a unique $\phi_u\in \mathcal{D}^{1,2}(\mathbb{R}^3)$ such that
\begin{equation*}
-\Delta \phi_u=K(x)u^2
\end{equation*}
and $\phi_u$ can be expressed by
\begin{equation*}
\phi_u(x)=\frac{1}{4\pi}\frac{1}{|x|}*(K(x)u^2)=\frac{1}{4\pi}\int_{\mathbb{R}^3}\frac{K(y)u^2(y)}{|x-y|}dy,~\forall x\in \mathbb{R}^3,
\end{equation*}
which is called Riesz potential (see \cite{Landkof}), we will omit the constant $\pi$ in the following. It is clear that $\phi_u(x)\geq 0$ for all $x\in \mathbb{R}^3$. Then the system \eqref{eq2.1} can be reduced to the Schr\"{o}dinger equation with nonlocal term:
\begin{equation}\label{eq2.2}
-\Delta u+V(\varepsilon x)u+K(\varepsilon x)\phi_uu=f(u)~~\text{in}\ \mathbb{R}^3.
\end{equation}
\par
We define the energy functional $\Phi_\varepsilon$ corresponding to equation \eqref{eq2.2} by
\begin{equation*}
\Phi_\varepsilon(u)=\frac{1}{2}\int_{\mathbb{R}^3}|\nabla u|^2dx
+\frac{1}{2}\int_{\mathbb{R}^3}V(\varepsilon x)u^2dx+\frac{1}{4}\int_{\mathbb{R}^3}
\big[\frac{1}{|x|}*K(\varepsilon x)u^2\big]K(\varepsilon x)u^2dx-\int_{\mathbb{R}^3}F(u)dx.
\end{equation*}
It follows by standard arguments that $\Phi_\varepsilon\in C^1(H_\varepsilon,\mathbb{R})$. Also, for any $u,v\in H_\varepsilon$, one has
\begin{equation*}
\Phi_\varepsilon^\prime(u)v=\int_{\mathbb{R}^3}\nabla u \nabla vdx+\int_{\mathbb{R}^3}V(\varepsilon x)uvdx+\int_{\mathbb{R}^3}
\big[\frac{1}{|x|}*K(\varepsilon x)u^2\big]K(\varepsilon x)uvdx-\int_{\mathbb{R}^3}f(u)vdx.
\end{equation*}
Moreover, it is proved that critical points of $\Phi_\varepsilon$ are weak solutions of system \eqref{eq2.1}.
\par
Because we are concerned with the non-local problem in view of the presence of term $\phi_u$, we would like to recall the well-known Hardy-Littlewood-Sobolev inequality.
\begin{Lem}[Hardy-Littlewood-Sobolev inequality, \cite{Lieb-Loss2001book}]\label{Lem2.2}
Let $t,r>1$ and $0<\mu<3$ with
\begin{equation*}
\frac{1}{t}+\frac{\mu}{3}+\frac{1}{r}=2,
\end{equation*}
$f\in L^t(\R^3)$ and $h\in L^r(\R^3)$. There exists a sharp constant $C(t,\mu,r)$, independent of $f,h$ such that
\begin{equation*}
\int_{\R^3}\int_{\R^3}\frac{f(x)h(y)}{|x-y|^\mu}dydx\leq C(t,\mu,r)|f|_t|h|_r.
\end{equation*}
\end{Lem}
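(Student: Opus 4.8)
The plan is to prove this by duality, reducing the bilinear estimate to a mapping property of the Riesz potential and then establishing that property by the classical truncation-plus-maximal-function argument. Writing $Tf(y):=\int_{\R^3}|x-y|^{-\mu}f(x)\,dx=(|\cdot|^{-\mu}*f)(y)$, the double integral equals $\int_{\R^3}h(y)\,Tf(y)\,dy$, so (after replacing $f,h$ by $|f|,|h|$ and using H\"older) it suffices to show that $T$ is bounded from $L^t(\R^3)$ into $L^{r'}(\R^3)$, where $r'$ denotes the conjugate exponent of $r$. The hypothesis $\frac{1}{t}+\frac{\mu}{3}+\frac{1}{r}=2$ translates exactly into the Sobolev relation $\frac{1}{r'}=\frac{1}{t}-\frac{3-\mu}{3}$, and from $r>1$ one reads off $\frac{1}{t}>\frac{3-\mu}{3}$, equivalently $\mu t'>3$ with $t'$ conjugate to $t$; this last inequality is precisely what makes the estimates below converge.

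First I would split the kernel at a free radius $R>0$: for fixed $y$, write $T|f|(y)$ as the integral over $\{|x-y|\le R\}$ plus the integral over $\{|x-y|>R\}$. Decomposing the ball dyadically into annuli $\{2^{-j-1}R<|x-y|\le 2^{-j}R\}$ and invoking the definition of the Hardy-Littlewood maximal function $Mf$, the near part is bounded by $CR^{3-\mu}Mf(y)$, the geometric series converging because $3-\mu>0$. For the far part, H\"older's inequality against $|x|^{-\mu}\chi_{\{|x|>R\}}\in L^{t'}$ yields the bound $CR^{\frac{3}{t'}-\mu}|f|_t$, the integrability being guaranteed exactly by $\mu t'>3$.

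Next I would optimise in $R$ by balancing the two terms, which produces the pointwise interpolation inequality $T|f|(y)\le C\,|f|_t^{\,a}\,(Mf(y))^{1-a}$ with $a=\frac{t(3-\mu)}{3}\in(0,1)$, the bound $a<1$ being again equivalent to $\frac{1}{t}>\frac{3-\mu}{3}$. Raising to the power $r'$ and integrating, the exponents are arranged so that $(1-a)r'=t$ and $ar'+t=r'$; hence the maximal term is controlled by the Hardy-Littlewood maximal theorem $|Mf|_t\le C|f|_t$ (valid since $t>1$), and a homogeneity check closes the estimate to $|Tf|_{r'}\le C|f|_t$. Combined with the duality step, this gives the inequality with some finite constant.

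The main obstacle is that the statement asserts a \emph{sharp} constant, whereas the maximal-function route produces only a non-optimal $C(t,\mu,r)$; sharpness cannot be obtained by interpolation. The standard remedy, due to Lieb, is to first apply the Riesz rearrangement inequality to reduce to radially symmetric decreasing $f$ and $h$, and then to exploit the conformal invariance of the kernel via the competing-symmetries method to identify the extremising family and compute the best constant explicitly. Since the applications in this paper only require the existence of a finite $C(t,\mu,r)$, I would present the non-sharp argument in full and cite Lieb-Loss for the sharp value.
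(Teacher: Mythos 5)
The paper gives no proof of this lemma at all: it is imported verbatim from Lieb--Loss as a classical black box, so there is no internal argument to compare yours against. Your proposal is the standard duality-plus-Hedberg proof of the non-sharp inequality, and the details check out: the algebra $\frac{1}{r'}=\frac{1}{t}-\frac{3-\mu}{3}$ and the equivalence of $r>1$ with $\mu t'>3$ are correct; the dyadic estimate of the near part by $CR^{3-\mu}Mf(y)$ uses only $3-\mu>0$; the far part's H\"older bound $CR^{3/t'-\mu}|f|_t$ needs exactly $\mu t'>3$; optimising in $R$ gives $T|f|(y)\le C|f|_t^{a}(Mf(y))^{1-a}$ with $a=t(3-\mu)/3\in(0,1)$, and the identities $(1-a)r'=t$ and $ar'+t=r'$ close the estimate via the maximal theorem for $t>1$. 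The one mismatch with the statement as written is the word \emph{sharp}: your route produces only some finite $C(t,\mu,r)$, and you correctly identify that the optimal constant requires Lieb's rearrangement and competing-symmetries argument, which you would cite rather than reprove. Since the paper only ever invokes this lemma to obtain the crude bound $\int_{\R^3}\bigl[\frac{1}{|x|}*K(\varepsilon x)u^2\bigr]K(\varepsilon x)u^2\,dx\leq C\|u\|_{12/5}^4$ and similar convergence estimates, a finite constant is all that is needed, so presenting the non-sharp proof in full and deferring sharpness to Lieb--Loss is a sound and complete resolution.
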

Using Hardy-Littlewood-Sobolev inequality, it is easy to check that
\begin{equation*}
\int_{\mathbb{R}^3}
\big[\frac{1}{|x|}*K(\varepsilon x)u^2\big]K(\varepsilon x)u^2dx\leq C\|u\|_{\frac{12}{5}}^4 \leq C\|u\|_\varepsilon^4.
\end{equation*}
To characteristic the ground state energy, we define the Nehari manifold by
\begin{equation*}
\mathcal{N}_\varepsilon=\{u\in H_\varepsilon\setminus\{0\}:\Phi_\varepsilon^\prime(u)u=0\}.
\end{equation*}
It is natural to define the ground state energy value by
\begin{equation*}
c_\varepsilon:=\inf_{u\in \mathcal{N}_\varepsilon}\Phi_\varepsilon (u).
\end{equation*}
If $c_\varepsilon$ is attained by $u\in \mathcal{N}_\varepsilon$, then $u$ is a critical point of $\Phi_\varepsilon$. Since $c_\varepsilon$ is the lowest level for $\Phi_\varepsilon$, $u$ is called a ground state solution of equation \eqref{eq2.1}.

\begin{Lem}\label{Lem2.5}
For any $u\in H_\varepsilon\setminus\{0\}$, there exists a unique $t_\varepsilon=t_\varepsilon(u)>0$ such that $t_\varepsilon u\in \mathcal{N}_\varepsilon$. Moreover,  $\Phi_\varepsilon(t_\varepsilon u)=\max\limits_{t\geq 0}\Phi_\varepsilon(tu)$.
\end{Lem}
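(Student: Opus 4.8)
The plan is to study the function $g(t) = \Phi_\varepsilon(tu)$ for $t \geq 0$ and show it has a unique positive critical point which is its global maximum. This is the standard fibering-map argument for Nehari manifolds, adapted to the Schrödinger--Poisson nonlocal structure. First I would write out $g(t)$ explicitly. Using the energy functional and the fact that the nonlocal term scales as $t^4$ (since $\phi_{tu} = t^2 \phi_u$), we get
\begin{equation*}
g(t) = \frac{t^2}{2}\|u\|_\varepsilon^2 + \frac{t^4}{4}\int_{\mathbb{R}^3}\big[\tfrac{1}{|x|}*K(\varepsilon x)u^2\big]K(\varepsilon x)u^2\,dx - \int_{\mathbb{R}^3}F(tu)\,dx.
\end{equation*}
The condition $tu \in \mathcal{N}_\varepsilon$ is exactly $g'(t) = 0$ (for $t>0$), since $\Phi_\varepsilon'(tu)(tu) = t\,g'(t)$.

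Next I would verify the behavior of $g$ at the endpoints to locate at least one critical point. Near $t = 0$: by Remark \ref{Rek1.1}, $F(tu) \leq \epsilon t^2 u^2 + C_\epsilon t^p |u|^p$ with $p > 4$, so for small $\epsilon$ the quadratic term $\frac{t^2}{2}\|u\|_\varepsilon^2$ dominates and $g(t) > 0$ for $t$ small, with $g(t)/t^2 \to \frac{1}{2}\|u\|_\varepsilon^2 > 0$; in particular $g$ is increasing from the origin. For large $t$: the estimate $F(t) \geq C_1 t^\mu - C_2$ with $\mu > 4$ forces the nonlinear term to eventually overwhelm both the quadratic and quartic positive terms, so $g(t) \to -\infty$. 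Hence $g$ attains a positive global maximum at some $t_\varepsilon > 0$, which is an interior critical point, giving existence of $t_\varepsilon$ with $t_\varepsilon u \in \mathcal{N}_\varepsilon$ and $\Phi_\varepsilon(t_\varepsilon u) = \max_{t\geq 0}\Phi_\varepsilon(tu)$.

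The main obstacle, and the only genuinely nontrivial part, is \emph{uniqueness}. This is where assumption $(f_4)$ enters decisively. I would divide the critical point equation $g'(t) = 0$ by $t^3$ (legitimate since $t > 0$) to rewrite it as
\begin{equation*}
\frac{\|u\|_\varepsilon^2}{t^2} + \int_{\mathbb{R}^3}\big[\tfrac{1}{|x|}*K(\varepsilon x)u^2\big]K(\varepsilon x)u^2\,dx = \int_{\mathbb{R}^3}\frac{f(tu)}{(tu)^3}u^4\,dx.
\end{equation*}
The left-hand side is strictly decreasing in $t$: the first term obviously so, and the middle term is constant. The right-hand side is strictly increasing in $t$: for each $x$ with $u(x) > 0$, the map $t \mapsto f(tu(x))/(tu(x))^3$ is strictly increasing by $(f_4)$, and $u \not\equiv 0$ ensures the integral is strictly increasing. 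A strictly decreasing function can equal a strictly increasing function at most once, so the critical point is unique. This completes the proof; the quartic nonlocal term, which would otherwise complicate monotonicity arguments, conveniently drops out as a constant after dividing by $t^3$, precisely matching the cubic growth encoded in $(f_1)$ and $(f_4)$.
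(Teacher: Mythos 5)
Your proposal is correct and follows essentially the same route as the paper: analyze the fibering map $t\mapsto\Phi_\varepsilon(tu)$ at the endpoints to get existence of a maximizer, then divide the critical-point equation by $t^3$ so that the quartic nonlocal term becomes a constant and $(f_4)$ yields monotonicity of the two sides, hence uniqueness. The only difference is that you spell out the endpoint estimates and the pointwise use of $(f_4)$ in slightly more detail than the paper does.
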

\begin{proof}
For any $t>0$, let $h(t)=\Phi_\varepsilon(tu)$. It is easy to check that $h(0)=0,h(t)>0$ for $t>0$ small and $h(t)<0$ for $t$ large. Therefore, $\max\limits_{t\geq 0}h(t)$ is achieved at a $t_\varepsilon =t_\varepsilon(u)>0$ such that $h^\prime(t_\varepsilon)=0$ and $t_\varepsilon u\in \mathcal{N}_\varepsilon$. Since we have
\begin{equation*}
\begin{split}
h^\prime(t)=0&\Leftrightarrow tu\in \mathcal{N}_\varepsilon\\
&\Leftrightarrow t^2\|u\|_\varepsilon^2+t^4\int_{\mathbb{R}^3}
\big[\frac{1}{|x|}*K(\varepsilon x)u^2\big]K(\varepsilon x)u^2dx=\int_{\mathbb{R}^3}f(tu)tudx\\
&\Leftrightarrow \frac{\|u\|_\varepsilon^2}{t^2}+\int_{\mathbb{R}^3}
\big[\frac{1}{|x|}*K(\varepsilon x)u^2\big]K(\varepsilon x)u^2dx=\int_{\mathbb{R}^3}\frac{f(tu)u}{t^3}dx.
\end{split}
\end{equation*}
By the definition of $h$, the left hand side is a strictly decreasing function, while the right hand side is a strictly increasing function of $t>0$. Therefore, $\max\limits_{t\geq 0}h(t)$ is achieved at a unique $t_\varepsilon=t_\varepsilon(u)>0$ such that $h^\prime (t_\varepsilon)=0$ and $t_\varepsilon u\in \mathcal{N}_\varepsilon$. Hence, we complete the proof.
\end{proof}

In order to obtain a ground state solution, we need a characterization of the ground state energy. Following \cite{Willem1996book}, we define
\begin{equation*}
c_\varepsilon=\inf_{u\in \mathcal{N}_\varepsilon}\Phi_\varepsilon(u),\quad c_\varepsilon^*=\inf_{\gamma\in \Gamma_\varepsilon}\max_{t\in[0,1]}\Phi_\varepsilon(\gamma(t)), \quad c_\varepsilon^{**}=\inf_{u\in H_\varepsilon\setminus\{0\}}\max_{t\geq 0}\Phi_\varepsilon(tu),
\end{equation*}
where $\Gamma_\varepsilon=\{\gamma\in C([0,1],H_\varepsilon):\gamma(0)=0,\Phi_\varepsilon(\gamma(1))<0\}$.

\begin{Lem}\label{Lem2.6}
$c_\varepsilon=c_\varepsilon^*=c_\varepsilon^{**}>0$.
\end{Lem}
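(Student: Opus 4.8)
The plan is to prove the chain $c_\varepsilon = c_\varepsilon^{**} \geq c_\varepsilon^* \geq c_\varepsilon$ together with a uniform positive lower bound on $\mathcal{N}_\varepsilon$, organized into four steps. First I would establish $c_\varepsilon = c_\varepsilon^{**}$ using the fibering map of Lemma \ref{Lem2.5}. For every $u \in H_\varepsilon \setminus \{0\}$ the quantity $\max_{t\geq 0}\Phi_\varepsilon(tu)$ is attained precisely at the unique point $t_\varepsilon(u)u \in \mathcal{N}_\varepsilon$, so $\max_{t\geq 0}\Phi_\varepsilon(tu) = \Phi_\varepsilon(t_\varepsilon(u)u) \geq c_\varepsilon$, and taking the infimum over $u$ gives $c_\varepsilon^{**} \geq c_\varepsilon$. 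Conversely, any $u \in \mathcal{N}_\varepsilon$ satisfies $t_\varepsilon(u)=1$, hence $\Phi_\varepsilon(u) = \max_{t\geq 0}\Phi_\varepsilon(tu) \geq c_\varepsilon^{**}$, and the infimum over $\mathcal{N}_\varepsilon$ yields the reverse inequality.

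Second, I would show $c_\varepsilon^{**} \geq c_\varepsilon^*$ by producing an admissible path along each ray. Fixing $u \neq 0$, the behavior $\Phi_\varepsilon(tu) \to -\infty$ as $t\to\infty$ (already used in Lemma \ref{Lem2.5}) provides $T>0$ with $\Phi_\varepsilon(Tu)<0$; the linear path $\gamma(s) = sTu$, $s\in[0,1]$, then lies in $\Gamma_\varepsilon$, and since its image is contained in the ray $\{tu : t\geq 0\}$ we get $\max_{s\in[0,1]}\Phi_\varepsilon(\gamma(s)) \leq \max_{t\geq 0}\Phi_\varepsilon(tu)$. Taking the infimum over $u$ on the right and over $\Gamma_\varepsilon$ on the left gives $c_\varepsilon^* \leq c_\varepsilon^{**}$.

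The main obstacle is the third step, $c_\varepsilon^* \geq c_\varepsilon$, which amounts to showing that every path $\gamma \in \Gamma_\varepsilon$ meets $\mathcal{N}_\varepsilon$. I would track the sign of the continuous function $s \mapsto \Phi_\varepsilon'(\gamma(s))\gamma(s)$. Near $s=0$ the point $\gamma(s)$ has small norm, and by the estimates of Remark \ref{Rek1.1} together with the Sobolev and Hardy--Littlewood--Sobolev bounds the nonlinear and nonlocal terms are of higher order, so $\Phi_\varepsilon'(\gamma(s))\gamma(s) > 0$ for small $s$ with $\gamma(s)\neq 0$, which in the language of Lemma \ref{Lem2.5} means $t_\varepsilon(\gamma(s))>1$. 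At $s=1$, since $\Phi_\varepsilon(\gamma(1))<0$ while $\max_{t\geq 0}\Phi_\varepsilon(t\gamma(1))>0$, the shape of $h(t)=\Phi_\varepsilon(t\gamma(1))$ (increasing then decreasing, with a single maximum) forces $t_\varepsilon(\gamma(1))<1$, i.e. $\Phi_\varepsilon'(\gamma(1))\gamma(1)<0$. By continuity there is $s^* \in (0,1)$ with $\gamma(s^*)\neq 0$ and $\Phi_\varepsilon'(\gamma(s^*))\gamma(s^*)=0$, so $\gamma(s^*)\in\mathcal{N}_\varepsilon$ and $\max_s\Phi_\varepsilon(\gamma(s)) \geq \Phi_\varepsilon(\gamma(s^*)) \geq c_\varepsilon$; the infimum over $\Gamma_\varepsilon$ then gives $c_\varepsilon^* \geq c_\varepsilon$. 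The delicate point is handling the parameter values where $\gamma(s)$ may vanish and justifying the strict sign change, which is exactly where the monotonicity in Lemma \ref{Lem2.5} and assumption $(f_4)$ are needed.

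Finally, for the strict positivity I would use the subtraction identity. For $u\in\mathcal{N}_\varepsilon$, the relation $\Phi_\varepsilon'(u)u=0$ combined with $(f_1)$ and $(f_2)$ gives $\|u\|_\varepsilon^2 \leq \int_{\mathbb{R}^3} f(u)u\,dx \leq \epsilon\|u\|_2^2 + C_\epsilon\|u\|_p^p$, which by the Sobolev embedding forces $\|u\|_\varepsilon \geq r_0$ for some $r_0>0$ independent of $u$. Then
\begin{equation*}
\Phi_\varepsilon(u) = \Phi_\varepsilon(u) - \tfrac14\Phi_\varepsilon'(u)u = \tfrac14\|u\|_\varepsilon^2 + \int_{\mathbb{R}^3}\Big(\tfrac14 f(u)u - F(u)\Big)dx \geq \tfrac14 r_0^2 > 0,
\end{equation*}
where the dropped nonlocal term is nonnegative and $\tfrac14 f(t)t - F(t) > 0$ by Remark \ref{Rek1.1}. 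Taking the infimum over $\mathcal{N}_\varepsilon$ yields $c_\varepsilon \geq \tfrac14 r_0^2 > 0$, which completes the proof.
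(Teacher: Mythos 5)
Your proposal is correct and follows essentially the same route as the paper: $c_\varepsilon=c_\varepsilon^{**}$ via Lemma \ref{Lem2.5}, $c_\varepsilon^*\leq c_\varepsilon^{**}$ via the linear path $s\mapsto sTu$, and $c_\varepsilon^*\geq c_\varepsilon$ by showing every admissible path crosses $\mathcal{N}_\varepsilon$ (you phrase this as an intermediate-value argument on $s\mapsto\Phi_\varepsilon'(\gamma(s))\gamma(s)$, the paper as the contrapositive contradiction, but these are the same idea). In fact you are slightly more complete, since you supply the uniform lower bound $\|u\|_\varepsilon\geq r_0$ on $\mathcal{N}_\varepsilon$ and the identity $\Phi_\varepsilon(u)-\tfrac14\Phi_\varepsilon'(u)u\geq\tfrac14\|u\|_\varepsilon^2$ to justify $c_\varepsilon>0$, a point the paper's proof asserts in the statement but does not argue.
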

\begin{proof}
It follows from Lemma \ref{Lem2.5} that $c_\varepsilon=c_\varepsilon^{**}$. Note that for any $u\in H_\varepsilon\setminus\{0\}$, there exists some $t_0>0$ such that $\Phi_\varepsilon(t_0u)<0$. Define a path $\gamma:[0,1]\rightarrow H_\varepsilon$ by $\gamma(t)=tt_0u$. Clearly, $\gamma\in \Gamma_\varepsilon$ and consequently, $c_\varepsilon^*\leq c_\varepsilon^{**}$. Next we prove that $c_\varepsilon\leq c_\varepsilon^*$. We have to show that given $\gamma\in \Gamma_\varepsilon$, there exist $\tilde{t}\in[0,1]$ such that $\gamma(t)\in \mathcal{N}_\varepsilon$. Assuming the contrary we have $\gamma(t)\notin \mathcal{N}_\varepsilon$ for all $t\in[0,1]$. In view of $(f_1)$, we have
\begin{equation*}
\int_{\mathbb{R}^3}|\nabla \gamma(t)|^2dx
+\int_{\mathbb{R}^3}V(\varepsilon x)\gamma^2(t)dx+\int_{\mathbb{R}^3}
\big[\frac{1}{|x|}*K(\varepsilon x)\gamma^2(t)\big]K(\varepsilon x)\gamma^2(t)dx>\int_{\mathbb{R}^3}f(\gamma(t))\gamma(t)dx
\end{equation*}
which implies that
\begin{equation*}
\Phi_\varepsilon(\gamma(t))>\frac{1}{4}\int_{\mathbb{R}^3}|\nabla \gamma(t)|^2dx
+\frac{1}{4}\int_{\mathbb{R}^3}V(\varepsilon x)\gamma^2(t)dx+\int_{\mathbb{R}^3}\bigg[\frac{1}{4}f(\gamma(t))\gamma(t)-F(\gamma(t))\bigg]dx\geq 0
\end{equation*}
which is a contradiction with the definition of $\gamma$.
\end{proof}
\par
In order to prove our main result, we will make use of the autonomous problem. Precisely, for any $\mu>0,\nu\geq0$, we consider the following constant coefficient system
\begin{equation}\label{eq2.3}
\begin{cases}
-\Delta u+\mu u+\nu\phi u=f(u)&\text{in}\ \mathbb{R}^3,\\
-\Delta \phi=\nu u^2&\text{in}\ \mathbb{R}^3,
\end{cases}
\end{equation}
and the corresponding energy functional
\begin{equation*}
\mathcal{J}_{\mu\nu}(u)=
\frac{1}{2}\int_{\mathbb{R}^3}|\nabla u|^2dx+\frac{\mu}{2}\int_{\mathbb{R}^3}u^2dx+\frac{\nu^2}{4}\int_{\mathbb{R}^3}\big[\frac{1}{|x|}*u^2\big]u^2dx
-\int_{\mathbb{R}^3}F(u)dx,
\end{equation*}
for $u\in H^1(\mathbb{R}^3)$. Define the Nehari manifold associated with \eqref{eq2.3} by
\begin{equation*}
\mathcal{N}^{\mu\nu}:=\big\{u\in H^1(\mathbb{R}^3)\setminus \{0\}:\mathcal{J}_{\mu\nu}^\prime (u)u=0\big\},
\end{equation*}
and the ground state energy
\begin{equation*}
c_{\mu\nu}=\inf_{u\in \mathcal{N}^{\mu\nu}}\mathcal{J}_{\mu\nu}(u).
\end{equation*}
The energy value $c_{\mu\nu}$ and the manifold $\mathcal{N}^{\mu\nu}$ have properties similar to those of $c_\varepsilon$ and $\mathcal{N}_\varepsilon$ stated in Lemmas \ref{Lem2.5}-\ref{Lem2.6}. Hence, for each $u\in H^1(\mathbb{R}^3)\setminus \{0\}$, there exists a unique $t_u>0$ such that $t_uu\in \mathcal{N}^{\mu\nu}$. Note that, $c_{\mu\nu}$ is attained at some positive $u\in H^1(\mathbb{R}^3)$, see \cite{He2011zamp} for $\nu>0$ and \cite{Rabinowitz1992} for $\nu=0$.

\par
The following Lemma describes a comparison between the ground state energy values for different parameters $\mu,\nu$, which will play an important role in proving the existence result in the following .
\begin{Lem}\label{Lem2.7}
Let $\mu_j>0$ and $\nu_j\geq0$ for $j=1,2$ with $\mu_1\leq \mu_2$ and $\nu_1\leq \nu_2$. Then $c_{\mu_1\nu_1}\leq c_{\mu_2\nu_2}$. In particular, if one of inequalities is strict, then $c_{\mu_1\nu_1}<c_{\mu_2\nu_2}$.
\end{Lem}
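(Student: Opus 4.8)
The plan is to transfer a pointwise monotonicity of the functionals $\mathcal{J}_{\mu\nu}$ in the parameters $(\mu,\nu)$ to the ground state levels, using the minimax characterization $c_{\mu\nu}=\inf_{u\in H^1(\mathbb{R}^3)\setminus\{0\}}\max_{t\geq 0}\mathcal{J}_{\mu\nu}(tu)$, which holds here by the analogue of Lemma \ref{Lem2.6} for the autonomous problem. Writing $D(u):=\int_{\mathbb{R}^3}\big[\frac{1}{|x|}*u^2\big]u^2\,dx\geq 0$, the first step I would record is the elementary identity, valid for every $u\in H^1(\mathbb{R}^3)$,
\[
\mathcal{J}_{\mu_2\nu_2}(u)-\mathcal{J}_{\mu_1\nu_1}(u)=\frac{\mu_2-\mu_1}{2}\int_{\mathbb{R}^3}u^2\,dx+\frac{\nu_2^2-\nu_1^2}{4}D(u)\geq 0,
\]
since $\mu_1\leq\mu_2$, $0\leq\nu_1\leq\nu_2$ (so $\nu_1^2\leq\nu_2^2$), and $D(u)\geq 0$. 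In particular $\mathcal{J}_{\mu_1\nu_1}(tu)\leq\mathcal{J}_{\mu_2\nu_2}(tu)$ for all $t\geq 0$ and all $u$.

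For the non-strict inequality, I would simply take $\max_{t\geq 0}$ in the previous bound and then $\inf$ over $u\neq 0$: the minimax characterization gives
\[
c_{\mu_1\nu_1}=\inf_{u}\max_{t\geq 0}\mathcal{J}_{\mu_1\nu_1}(tu)\leq\inf_{u}\max_{t\geq 0}\mathcal{J}_{\mu_2\nu_2}(tu)=c_{\mu_2\nu_2},
\]
with no compactness needed at this stage.

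For the strict inequality I would instead use that $c_{\mu_2\nu_2}$ is attained by a positive $w\in\mathcal{N}^{\mu_2\nu_2}$, as recalled just before the statement. By the Nehari projection (the analogue of Lemma \ref{Lem2.5}) there is a unique $t_1>0$ with $t_1w\in\mathcal{N}^{\mu_1\nu_1}$ and $\mathcal{J}_{\mu_1\nu_1}(t_1w)=\max_{t\geq 0}\mathcal{J}_{\mu_1\nu_1}(tw)$. Since $w>0$ we have $\int_{\mathbb{R}^3}w^2\,dx>0$ and $D(w)>0$, so whenever $\mu_1<\mu_2$ or $\nu_1<\nu_2$ the gap at the point $t_1w$ is strictly positive, and therefore
\[
c_{\mu_1\nu_1}\leq\mathcal{J}_{\mu_1\nu_1}(t_1w)<\mathcal{J}_{\mu_2\nu_2}(t_1w)\leq\max_{t\geq 0}\mathcal{J}_{\mu_2\nu_2}(tw)=\mathcal{J}_{\mu_2\nu_2}(w)=c_{\mu_2\nu_2},
\]
the last equality using $w\in\mathcal{N}^{\mu_2\nu_2}$.

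The only genuinely nontrivial ingredient is the attainment of $c_{\mu_2\nu_2}$ by a positive minimizer, which is where compactness enters and which is precisely the fact quoted in the excerpt. The one point requiring care is that a strict pointwise inequality between functionals need not survive two separate maximizations; I avoid this by evaluating the strict gap at the single test function $t_1w$ rather than comparing the two max-values directly, which is the step that makes the argument go through.
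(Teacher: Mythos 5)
Your argument is correct and is essentially the paper's own proof: the paper also takes the minimizer $u$ attaining $c_{\mu_2\nu_2}$, projects it onto $\mathcal{N}^{\mu_1\nu_1}$ as $u_0=t_1u$, and runs the chain $c_{\mu_2\nu_2}=\mathcal{J}_{\mu_2\nu_2}(u)\geq\mathcal{J}_{\mu_2\nu_2}(u_0)=\mathcal{J}_{\mu_1\nu_1}(u_0)+\frac{\nu_2^2-\nu_1^2}{4}D(u_0)+\frac{\mu_2-\mu_1}{2}\int_{\mathbb{R}^3}u_0^2\,dx\geq c_{\mu_1\nu_1}$, with strictness coming from the positivity of the extra terms exactly as you observe. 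Your separate minimax derivation of the non-strict inequality is a harmless redundancy, and your closing remark about evaluating the strict gap at the single test function $t_1w$ is precisely the point that makes both versions work.
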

\begin{proof}
Let $u\in \mathcal{N}^{\mu_2\nu_2}$ be such that
 \begin{equation*}
c_{\mu_2\nu_2}=\mathcal{J}_{\mu_2\nu_2}(u)=\max_{t>0}\mathcal{J}_{\mu_2\nu_2}(tu).
\end{equation*}
Let $u_0=t_1u$ be such that $\mathcal{J}_{\mu_1\nu_1}(u_0)=\max\limits_{t>0}\mathcal{J}_{\mu_1\nu_1}(tu)$. One has
 \begin{equation*}
 \begin{split}
c_{\mu_2\nu_2}&=\mathcal{J}_{\mu_2\nu_2}(u)\geq \mathcal{J}_{\mu_2\nu_2}(u_0)\\
&=\mathcal{J}_{\mu_1\nu_1}(u_0)+\frac{\nu_2^2-\nu_1^2}{4}\int_{\mathbb{R}^3}
\big[\frac{1}{|x|}*u_0^2\big]u_0^2dx+\frac{\mu_2-\mu_1}{2}\int_{\mathbb{R}^3}u_0^2dx\\
&\geq c_{\mu_1\nu_1}.
\end{split}
\end{equation*}
Thus, we complete the proof.
\end{proof}

\section{The modified problem}
In what follows, we will not work directly with the functional $\Phi_\varepsilon$, because we have some difficulties to verify the $(PS)$ condition. We will adapt for our case an argument explored by the penalization method introduction by del Pino and Felmer \cite{Pino1996CVPDE}, and build a convenient modification of the energy functional $\Phi_\varepsilon$ such that it satisfies the $(PS)$ condition.
\par
Note that, by $(f_1)$ and $(f_4)$, it is easy to check that
\begin{equation}\label{eq3.1}
f(t)=o(t)\quad\text{as}\quad t\rightarrow 0
\end{equation}
and
\begin{equation}\label{eq3.2}
\text{The~function}~\frac{f(t)}{t}~\text{is~increasing~for}~t>0.
\end{equation}
Now, let us fix $\kappa>1$ and then by \eqref{eq3.1}-\eqref{eq3.2}, there exists unique $a>0$ such that
\begin{equation*}
\frac{f(a)}{a}=\frac{V_0}{\kappa}
\end{equation*}
 where $V_0 > 0$ was given in $(V_2)$. Using these numbers, we set the functions
\begin{equation*}
f_*(t)=
\begin{cases}
f(t),&\text{if}~t\leq a\\
\frac{V_0}{\kappa}t,&\text{if}~t>a
\end{cases}
\end{equation*}
and
\begin{equation*}
g(x,t)=\chi(x)f(t)+(1-\chi(x))f_*(t),
\end{equation*}
where $\Lambda$ is given in $(V_2)$ and $\chi$ is the characteristic function of the set $\Lambda$ in $\mathbb{R}^3$.
Clearly, $f_*\in C(\mathbb{R}^+,\mathbb{R}^+)$ and $f_*(t)\leq f(t),f_*(t)\leq \frac{V_0}{\kappa}t$ for all $t\geq 0$. From hypotheses $(f_1)$-$(f_4)$, it is easy to check that $g$ is a Carath$\acute{e}$odory function and satisfies the following properties.
\begin{itemize}
\item[$(g_1)$] $0<\mu G(x,t)\leq g(x,t)t$~for all $(x,t)\in \Lambda \times (0,\infty)$;
\item[$(g_2)$] $0\leq 2G(x,t)\leq g(x,t)t\leq \frac{V_0}{\kappa}t^2$~for all $x\in \Lambda^c\times (0,\infty)$,
\end{itemize}
where $G(x,t)=\int_0^tg(x,s)ds$. From $(g_1)$-$(g_2)$, it is easy to check that
\begin{itemize}
\item[$(g_3)$] $\frac{1}{4\kappa}V(x)t^2+\frac{1}{4}g(x,t)t-G(x,t)\geq 0$~for all $(x,t)\in \mathbb{R}^3\times \mathbb{R}$;
\item[$(g_4)$] $M(x,t):=V(x)t^2-g(x,t)t\geq (1-\frac{1}{\kappa})V(x)t^2\geq 0$~for all $(x,t)\in \Lambda^c\times \mathbb{R}$.
\end{itemize}

Now we study the modified problem
\begin{equation}\label{eq3.3}
\begin{cases}
-\Delta u+V(\varepsilon x)u+K(\varepsilon x)\phi u=g(\varepsilon x,u)~&\text{in}~\mathbb{R}^3,\\
-\Delta \phi =K(\varepsilon x)u^2~&\text{in}~\mathbb{R}^3.
\end{cases}
\end{equation}
Note that if $u_\varepsilon$ is a solution of \eqref{eq3.3} with
\begin{equation*}
u_\varepsilon(x)\leq a~,\forall x\in \Lambda_\varepsilon^c,
\end{equation*}
then $u_\varepsilon$ is a solution of system \eqref{eq2.1}, where $\Lambda_\varepsilon:=\{x\in \mathbb{R}^3:\varepsilon x\in \Lambda\}$.
\par
Now, we define the modified functional $\tilde{\Phi}_\varepsilon: H_\varepsilon\rightarrow \mathbb{R}$ as
\begin{equation*}
\tilde{\Phi}_\varepsilon(u)=\frac{1}{2}\int_{\mathbb{R}^3}|\nabla u|^2dx
+\frac{1}{2}\int_{\mathbb{R}^3}V(\varepsilon x)u^2dx+\frac{1}{4}\int_{\mathbb{R}^3}
\big[\frac{1}{|x|}*K(\varepsilon x)u^2\big]K(\varepsilon x)u^2dx-\int_{\mathbb{R}^3}G(\varepsilon x,u)dx,
\end{equation*}
which is of class $C^1$ on $H_\varepsilon$ and its critical points are the solutions of \eqref{eq3.3}.

The following Lemma implies that the functional $\tilde{\Phi}_\varepsilon$ possesses the Mountain Pass structure.
\begin{Lem}\label{Lem3.1}
The functional $\tilde{\Phi}_\varepsilon$ satisfies the following conditions:
\begin{itemize}
\item[$(i)$]  There exists $\alpha,\rho>0$ such that $\tilde{\Phi}_\varepsilon(u)\geq\alpha$ with $\|u\|_\varepsilon=\rho$;
\item[$(ii)$] There exists $e\in H_\varepsilon$ satisfying $\|e\|_\varepsilon>\rho$ such that $\tilde{\Phi}_\varepsilon(e)<0$.
\end{itemize}
\end{Lem}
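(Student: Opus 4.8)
The plan is to verify the two Mountain Pass conditions separately, exploiting the fact that the nonlocal term is nonnegative (because $K\geq0$ and the Riesz potential of a nonnegative function is nonnegative). Thus it can simply be discarded in the coercivity estimate $(i)$, while in the superquadratic estimate $(ii)$ it only needs to be controlled from above.

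For $(i)$, I would first drop the nonlocal term to obtain $\tilde{\Phi}_\varepsilon(u)\geq\frac12\|u\|_\varepsilon^2-\int_{\R^3}G(\varepsilon x,u)\,dx$, and then split the last integral over $\Lambda_\varepsilon$ and $\Lambda_\varepsilon^c$. On $\Lambda_\varepsilon^c$ one has $G(\varepsilon x,u)=\int_0^u f_*(s)\,ds$, and properties $(g_2)$ and $(g_4)$ combine to give $G(\varepsilon x,u)\leq\frac{1}{2\kappa}V(\varepsilon x)u^2$, so that $\int_{\Lambda_\varepsilon^c}G\leq\frac{1}{2\kappa}\|u\|_\varepsilon^2$. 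On $\Lambda_\varepsilon$ we have $G(\varepsilon x,u)=F(u)$, and Remark \ref{Rek1.1} together with the continuous embeddings $H_\varepsilon\hookrightarrow L^2,L^p$ (recall that $(V_1)$ yields $\int u^2\leq V_1^{-1}\|u\|_\varepsilon^2$) gives $\int_{\Lambda_\varepsilon}F(u)\leq\frac{\epsilon}{V_1}\|u\|_\varepsilon^2+C_\epsilon'\|u\|_\varepsilon^p$ for any $\epsilon>0$. Combining these,
\[
\tilde{\Phi}_\varepsilon(u)\geq\Big(\tfrac12-\tfrac{1}{2\kappa}-\tfrac{\epsilon}{V_1}\Big)\|u\|_\varepsilon^2-C_\epsilon'\|u\|_\varepsilon^p.
\]
Since $\kappa>1$ the factor $\frac12-\frac{1}{2\kappa}$ is strictly positive; choosing $\epsilon$ small makes the whole quadratic coefficient a constant $\delta>0$. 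As $p>4>2$, for $\|u\|_\varepsilon=\rho$ with $\rho$ small the term $C_\epsilon'\rho^{p-2}$ is negligible against $\delta$, whence $\tilde{\Phi}_\varepsilon(u)\geq\alpha:=\frac{\delta}{2}\rho^2>0$, which is $(i)$.

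For $(ii)$, I would fix a nonnegative $u_0\in C_c^\infty(\R^3)\setminus\{0\}$ with $\mathrm{supp}\,u_0\subset\Lambda_\varepsilon$ (possible since $0\in\Lambda$ makes $\Lambda_\varepsilon$ a nonempty open set), so that on its support $G(\varepsilon x,u_0)=F(u_0)$ and the superquadratic lower bound $F(t)\geq C_1t^\mu-C_2$ from Remark \ref{Rek1.1} applies. Evaluating along the ray $tu_0$, $t>0$, and bounding the nonlocal term via Lemma \ref{Lem2.2} by a multiple of $t^4$, gives
\[
\tilde{\Phi}_\varepsilon(tu_0)\leq\frac{t^2}{2}\|u_0\|_\varepsilon^2+C\frac{t^4}{4}-C_1t^\mu\int_{\Lambda_\varepsilon}u_0^\mu\,dx+C_2|\Lambda_\varepsilon|.
\]
Because $\mu>4$, the negative term of order $t^\mu$ dominates both the quadratic and the quartic contributions, so $\tilde{\Phi}_\varepsilon(tu_0)\to-\infty$ as $t\to+\infty$; choosing $t$ large and setting $e:=tu_0$ yields $\|e\|_\varepsilon>\rho$ and $\tilde{\Phi}_\varepsilon(e)<0$.

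The delicate point is the coercivity estimate in $(i)$: the portion of $\int G$ coming from $\Lambda_\varepsilon^c$ is only quadratic in $u$ and cannot be made small, so it must be absorbed directly into $\frac12\|u\|_\varepsilon^2$. This is precisely where the penalization is engineered to help, since the cap $f_*(t)=\frac{V_0}{\kappa}t$ with $\kappa>1$ forces the surviving coefficient $\frac12\big(1-\frac1\kappa\big)$ to be strictly positive. The hypothesis $\mu>4$ in $(f_3)$ plays an equally essential role in $(ii)$, as it is exactly what allows the nonlinearity to overcome the quartic growth of the nonlocal term along rays.
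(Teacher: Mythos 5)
Your proof is correct and follows essentially the same route as the paper: drop the nonnegative nonlocal term and bound $G$ to get a quadratic-minus-superquadratic estimate for $(i)$, then run along a ray $t u_0$ with $u_0$ supported in $\Lambda_\varepsilon$ using $F(t)\geq C_1t^\mu-C_2$ and $\mu>4$ for $(ii)$. The only cosmetic difference is in $(i)$: the paper simply uses $G(\varepsilon x,t)\leq F(t)$ globally together with $F(t)\leq\epsilon t^2+C_\epsilon t^p$, whereas you split $\R^3$ into $\Lambda_\varepsilon$ and $\Lambda_\varepsilon^c$ and absorb the quadratic cap via the coefficient $\tfrac12(1-\tfrac1\kappa)$ — both yield the required positive quadratic coefficient for small $\rho$.
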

\begin{proof}
$(i).$ For any $u\in H_\varepsilon\backslash\{0\}$, by the Sobolev inequality, we have
\begin{equation*}
\begin{split}
\tilde{\Phi}_\varepsilon(u)&=\frac{1}{2}\|u\|_\varepsilon^2+\frac{1}{4}\int_{\mathbb{R}^3}
\big[\frac{1}{|x|}*K(\varepsilon x)u^2\big]K(\varepsilon x)u^2dx-\int_{\mathbb{R}^3}G(\varepsilon x,u)dx\\
&\geq \frac{1}{2}\|u\|_\varepsilon^2-\int_{\mathbb{R}^3}F(u)dx\\
&\geq \frac{1}{4}\|u\|_\varepsilon^2-C\|u\|_\varepsilon^p.
\end{split}
\end{equation*}
Since $p>4$, hence, we can choose some $\rho>0$ such that
\begin{equation*}
\tilde{\Phi}_\varepsilon(u)\geq \alpha~\text{with}~\|u\|_\varepsilon=\rho.
\end{equation*}
\par
$(ii).$ Fix $\varphi\in H_\varepsilon\setminus\{0\}$ with $\text{supp}\varphi\subset \Lambda_\varepsilon$. Then, for $t>0$, we have
\begin{equation*}
\begin{split}
\tilde{\Phi}_\varepsilon(t \varphi)&\leq \frac{t^2}{2}\|\varphi\|_\varepsilon^2+\frac{t^4}{4}\|K\|_\infty^2\int_{\mathbb{R}^3}
\big[\frac{1}{|x|}*\varphi^2\big]\varphi^2dx-\int_{\Lambda_\varepsilon}F(t \varphi)dx\\
&\leq\frac{t^2}{2}\|\varphi\|_\varepsilon^2+\frac{t^4}{4}\|K\|_\infty^2\int_{\mathbb{R}^3}
\big[\frac{1}{|x|}*\varphi^2\big]\varphi^2dx-C_1t^\mu\int_{\Lambda_\varepsilon}|\varphi|^\mu dx+C_2|\Lambda_\varepsilon|,
\end{split}
\end{equation*}
where we have used the Remark \ref{Rek1.1}. Therefore, $(ii)$ follows with $e=t\varphi$ and $t$ large enough.
\end{proof}
\par
It follows from Lemma \ref{Lem3.1} and the Mountain Pass theorem without (PS) condition \cite{Willem1996book} that there exist a $(P S)_{c}$ sequence $\left\{u_{n}\right\} \subset H_{\varepsilon}$ such that
\begin{equation}\label{eq3.4}
\tilde{\Phi}_\varepsilon\left(u_{n}\right) \rightarrow \tilde{c}_{\varepsilon}\quad\text{and}\quad \tilde{\Phi}_\varepsilon^{\prime}\left(u_{n}\right) \rightarrow 0\quad \text{in}~H_{\varepsilon}^{-1},
\end{equation}
with the mountain pass level
$$\tilde{c}_{\varepsilon}=\inf _{\gamma \in \tilde{\Gamma}_\varepsilon} \sup _{t \in[0,1]} \tilde{\Phi}_\varepsilon(\gamma(t))>0,$$
where $\tilde{\Gamma}_\varepsilon=\left\{\gamma \in C\left([0,1], H_{\varepsilon}\right): \gamma(0)=0, \tilde{\Phi}_\varepsilon(\gamma(1))<0\right\} .$ Moreover, we have the following Lemmas.

\begin{Lem}\label{Lem3.2}
$\{u_n\}$ is bounded in $H_\varepsilon$.
\end{Lem}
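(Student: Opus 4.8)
The plan is to exploit the super-quadratic growth of the nonlinearity encoded in $(g_1)$ together with the sign condition $(g_3)$ coming from the penalization, following the classical Ambrosetti--Rabinowitz argument adapted to the presence of the nonlocal term. The crucial observation is that the Coulomb-type term $\int_{\mathbb{R}^3}\big[\frac{1}{|x|}*K(\varepsilon x)u^2\big]K(\varepsilon x)u^2\,dx$ is homogeneous of degree four in $u$, so forming the combination $\tilde{\Phi}_\varepsilon(u_n)-\frac14\tilde{\Phi}_\varepsilon'(u_n)u_n$ eliminates it completely and leaves only the quadratic part and a nonlinear remainder that I can control. This cancellation is automatic from the homogeneity and requires no sign information about the nonlocal term.

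Concretely, I would first compute, using that the nonlocal term carries coefficient $\frac14$ in $\tilde{\Phi}_\varepsilon$ and coefficient $1$ in $\tilde{\Phi}_\varepsilon'(u_n)u_n$,
$$\tilde{\Phi}_\varepsilon(u_n)-\frac14\tilde{\Phi}_\varepsilon'(u_n)u_n=\frac14\|u_n\|_\varepsilon^2+\int_{\mathbb{R}^3}\Big[\frac14 g(\varepsilon x,u_n)u_n-G(\varepsilon x,u_n)\Big]\,dx.$$
Next I would bound the last integral from below by splitting $\mathbb{R}^3=\Lambda_\varepsilon\cup\Lambda_\varepsilon^c$. On $\Lambda_\varepsilon$ property $(g_1)$ (with $\mu>4$) gives $\frac14 g(\varepsilon x,u_n)u_n-G(\varepsilon x,u_n)\geq(\frac14-\frac1\mu)g(\varepsilon x,u_n)u_n\geq0$, while globally $(g_3)$ yields $\frac14 g(\varepsilon x,u_n)u_n-G(\varepsilon x,u_n)\geq-\frac{1}{4\kappa}V(\varepsilon x)u_n^2$. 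Using the latter bound on $\Lambda_\varepsilon^c$ and discarding the nonnegative contribution from $\Lambda_\varepsilon$, the integral is at least $-\frac{1}{4\kappa}\int_{\mathbb{R}^3}V(\varepsilon x)u_n^2\,dx\geq-\frac{1}{4\kappa}\|u_n\|_\varepsilon^2$, whence
$$\tilde{\Phi}_\varepsilon(u_n)-\frac14\tilde{\Phi}_\varepsilon'(u_n)u_n\geq\frac14\Big(1-\frac1\kappa\Big)\|u_n\|_\varepsilon^2,$$
and the coefficient $\frac14(1-\frac1\kappa)$ is strictly positive precisely because $\kappa>1$.

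Finally I would close the estimate with the $(PS)_{\tilde c_\varepsilon}$ information \eqref{eq3.4}: since $\tilde{\Phi}_\varepsilon(u_n)\to\tilde c_\varepsilon$ and $\|\tilde{\Phi}_\varepsilon'(u_n)\|_{H_\varepsilon^{-1}}\to0$, the left-hand side is bounded above by $\tilde c_\varepsilon+1+\frac14\|\tilde{\Phi}_\varepsilon'(u_n)\|_{H_\varepsilon^{-1}}\|u_n\|_\varepsilon\leq C+o(1)\|u_n\|_\varepsilon$ for $n$ large. Combining the two inequalities gives $\frac14(1-\frac1\kappa)\|u_n\|_\varepsilon^2\leq C+o(1)\|u_n\|_\varepsilon$, which forces $\{\|u_n\|_\varepsilon\}$ to be bounded. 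I expect the only delicate point to be the estimate on $\Lambda_\varepsilon^c$: there the Ambrosetti--Rabinowitz inequality $(g_1)$ is unavailable, so one must instead rely on $(g_3)$ and absorb the resulting negative term $-\frac{1}{4\kappa}\|u_n\|_\varepsilon^2$ into the quadratic part, which succeeds only thanks to the strict inequality $\kappa>1$ built into the penalization scheme.
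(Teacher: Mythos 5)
Your proposal is correct and follows essentially the same route as the paper: form $\tilde{\Phi}_\varepsilon(u_n)-\frac14\tilde{\Phi}_\varepsilon'(u_n)u_n$ so that the degree-four nonlocal term cancels, then use the penalization inequality $(g_3)$ to absorb the nonlinear remainder into $\frac{1}{4\kappa}\|u_n\|_\varepsilon^2$, leaving the coercive coefficient $\frac14(1-\frac1\kappa)>0$. Your extra splitting into $\Lambda_\varepsilon$ and $\Lambda_\varepsilon^c$ with $(g_1)$ on $\Lambda_\varepsilon$ is harmless but unnecessary, since the paper applies $(g_3)$ globally in one step.
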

\begin{proof}
Let $\{u_n\}\subset H_\varepsilon$ be a $(PS)_c$ sequence for $\tilde{\Phi}_\varepsilon$, that is
\begin{equation*}
\tilde{\Phi}_\varepsilon(u_n)\rightarrow \tilde{c}_\varepsilon\quad\text{and}\quad\tilde{\Phi}_\varepsilon^\prime(u_n)\rightarrow 0.
\end{equation*}
By $(g_4)$, for $n$ large enough, one has
\begin{equation*}
\begin{split}
C+C\|u_n\|_\varepsilon&\geq \tilde{\Phi}_\varepsilon(u_n)-\frac{1}{4}\tilde{\Phi}_\varepsilon^\prime(u_n)u_n\\
&=\frac{1}{4}\|u_n\|_\varepsilon^2+\int_{\mathbb{R}^3}\big[\frac{1}{4}g(\varepsilon x,u_n)-G(\varepsilon x,u_n)\big]dx\\
&\geq\frac{1}{4}(1-\frac{1}{\kappa})\|u_n\|_\varepsilon^2+\int_{\mathbb{R}^3}\big[\frac{1}{4\kappa}V(\varepsilon x)u_n^2+\frac{1}{4}g(\varepsilon x,u_n)-G(\varepsilon x,u_n)\big]dx\\
&\geq \frac{1}{4}(1-\frac{1}{\kappa})\|u_n\|_\varepsilon^2
\end{split}
\end{equation*}
which implies that $\{u_n\}$ is bounded in $H_\varepsilon$.
\end{proof}
\par

Adopting similar arguments as in Lemma \ref{Lem2.6} we have the following equivalent characterization of $\tilde{c}_{\varepsilon}$.

\begin{Lem}\label{Lem3.3}
$\tilde{c}_\varepsilon=\inf\limits_{u\in H_\varepsilon\setminus\{0\}}\max\limits_{t\geq 0}\tilde{\Phi}_\varepsilon(tu)$ for any $\varepsilon>0$ small.
\end{Lem}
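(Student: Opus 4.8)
The strategy is to reduce the statement to the argument already used for Lemma~\ref{Lem2.6}, the only genuinely new input being an analogue of Lemma~\ref{Lem2.5} for the penalized functional. Write $d_\varepsilon:=\inf_{u\in H_\varepsilon\setminus\{0\}}\max_{t\ge0}\tilde{\Phi}_\varepsilon(tu)$ and, for fixed $u\in H_\varepsilon\setminus\{0\}$, set $h_u(t)=\tilde{\Phi}_\varepsilon(tu)$. As in Lemma~\ref{Lem3.1}, $h_u(t)>0$ for small $t>0$; and if $u\not\equiv0$ on $\Lambda_\varepsilon$ then the bound $F(s)\ge C_1s^\mu-C_2$ with $\mu>4$ from Remark~\ref{Rek1.1} gives $h_u(t)\to-\infty$, so $h_u$ attains a positive maximum at some $t_\varepsilon(u)>0$ with $t_\varepsilon(u)u\in\tilde{\mathcal{N}}_\varepsilon:=\{v\in H_\varepsilon\setminus\{0\}:\tilde{\Phi}_\varepsilon'(v)v=0\}$.

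The main obstacle is to show that this maximizer is \emph{unique}, equivalently that $t\mapsto h_u'(t)/t^3$ is strictly decreasing, so that $\max_{t\ge0}h_u(t)$ is realized exactly on $\tilde{\mathcal{N}}_\varepsilon$ and $d_\varepsilon=\inf_{\tilde{\mathcal{N}}_\varepsilon}\tilde{\Phi}_\varepsilon$. This monotonicity, which for the original functional is immediate from $(f_4)$ (as in Lemma~\ref{Lem2.5}), is delicate here because the penalization sets $f_*(s)=\frac{V_0}{\kappa}s$ for $s>a$ outside $\Lambda$, where $g(\varepsilon x,s)/s^3$ is decreasing, while the nonnegative quartic term $\int[\frac{1}{|x|}*K(\varepsilon x)u^2]K(\varepsilon x)u^2\,dx$ is $4$-homogeneous and cannot be absorbed by it. I would handle this by splitting the $x$-integration: on $\Lambda_\varepsilon$ the full condition $(f_4)$ makes the corresponding integrand of $h_u'(t)/t^3$ strictly decreasing in $t$; on $\Lambda_\varepsilon^c$ the truncation obeys $g(\varepsilon x,s)s\le\frac{V_0}{\kappa}s^2$, so by $(g_4)$ its contribution is dominated by the potential term $V(\varepsilon x)u^2$ in $\|u\|_\varepsilon^2$, and a direct estimate of $\frac{d}{dt}\big(h_u'(t)/t^3\big)$ (valid a.e., since $g(\varepsilon x,\cdot)$ is Lipschitz) shows the sum is still strictly decreasing; this is the point where the size of the penalization parameter $\kappa$ enters. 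This yields the analogue of Lemma~\ref{Lem2.5} and hence $d_\varepsilon=\inf_{\tilde{\mathcal{N}}_\varepsilon}\tilde{\Phi}_\varepsilon$.

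With this in hand the two remaining inequalities go exactly as in Lemma~\ref{Lem2.6}. For $\tilde{c}_\varepsilon\le d_\varepsilon$, given $u$ with $\max_{t\ge0}h_u(t)<\infty$ choose $t_0$ with $\tilde{\Phi}_\varepsilon(t_0u)<0$; the linear path $\gamma(s)=st_0u$ lies in $\tilde{\Gamma}_\varepsilon$ and satisfies $\sup_{s\in[0,1]}\tilde{\Phi}_\varepsilon(\gamma(s))\le\max_{t\ge0}h_u(t)$, so passing to the infimum gives $\tilde{c}_\varepsilon\le d_\varepsilon$. For the reverse inequality I show that every $\gamma\in\tilde{\Gamma}_\varepsilon$ meets $\tilde{\mathcal{N}}_\varepsilon$: the function $\psi(t)=\tilde{\Phi}_\varepsilon'(\gamma(t))\gamma(t)$ is positive for $t$ near $0$ (small norm, as in Lemma~\ref{Lem3.1}$(i)$, using $g(\varepsilon x,s)\le f(s)$ and nonnegativity of the nonlocal term), while the identity $\tilde{\Phi}_\varepsilon(v)-\frac14\tilde{\Phi}_\varepsilon'(v)v\ge\frac14(1-\frac1\kappa)\|v\|_\varepsilon^2\ge0$ from $(g_3)$ (used in Lemma~\ref{Lem3.2}) forces $\psi(1)<0$ because $\tilde{\Phi}_\varepsilon(\gamma(1))<0$. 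By the intermediate value theorem there is $\bar t$ with $\gamma(\bar t)\in\tilde{\mathcal{N}}_\varepsilon$, whence $\sup_{t}\tilde{\Phi}_\varepsilon(\gamma(t))\ge\tilde{\Phi}_\varepsilon(\gamma(\bar t))\ge\inf_{\tilde{\mathcal{N}}_\varepsilon}\tilde{\Phi}_\varepsilon=d_\varepsilon$; taking the infimum over $\gamma$ gives $\tilde{c}_\varepsilon\ge d_\varepsilon$, and the proof is complete.
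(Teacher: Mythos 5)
Your proposal is correct and, at the level of architecture, follows the same del Pino--Felmer route as the paper: identify $\inf_{u}\max_{t\ge0}\tilde{\Phi}_\varepsilon(tu)$ with the infimum of $\tilde{\Phi}_\varepsilon$ over the set of ray-critical points, show every path in $\tilde{\Gamma}_\varepsilon$ crosses that set, and use linear paths for the reverse inequality. The difference is in where the effort goes. The paper spends nearly all of its proof constructing, for each $u$, a $t_0$ with $\tilde{\Phi}_\varepsilon(t_0u)<-1$ by comparison with the $\varepsilon$-independent functional $\mathcal{J}_\infty$ and a ball $B_{R_0}\subset\Lambda_\varepsilon$ (this is the only place the restriction to small $\varepsilon$ enters), and then merely asserts that $t\mapsto\tilde{\Phi}_\varepsilon(tu)$ has at most one nontrivial critical point. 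You do the opposite: you obtain $h_u(t)\to-\infty$ directly from $F(s)\ge C_1s^\mu-C_2$ with $\mu>4$ on $\Lambda_\varepsilon$ (which in fact works for every $\varepsilon>0$, and you correctly discard the rays with $u\equiv0$ on $\Lambda_\varepsilon$, whose max is $+\infty$), and you concentrate on the uniqueness, which is the genuinely delicate step since $f_*(s)/s^3$ is \emph{decreasing} for $s>a$. Your splitting is the right fix and can be made clean without any differentiation: moving the $\Lambda_\varepsilon^c$ contribution to the left of the identity $h_u'(t)=0$, the quantity $\frac{1}{t^2}\bigl[\|u\|_\varepsilon^2-\int_{\Lambda_\varepsilon^c}\frac{f_*(tu)}{tu}u^2\,dx\bigr]+\int_{\mathbb{R}^3}\bigl[\frac{1}{|x|}*K(\varepsilon x)u^2\bigr]K(\varepsilon x)u^2\,dx$ is strictly decreasing in $t$ (the bracket stays above $(1-\frac1\kappa)\|u\|_\varepsilon^2>0$ by $(g_4)$ and is nonincreasing because $f_*(s)/s$ is nondecreasing), while the remaining term $\int_{\Lambda_\varepsilon}\frac{f(tu)}{(tu)^3}u^4\,dx$ is nondecreasing by $(f_4)$; hence at most one intersection. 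The only repair I would ask for is to phrase this as a comparison of values at $t_1<t_2$ rather than as an a.e.\ estimate of $\frac{d}{dt}\bigl(h_u'(t)/t^3\bigr)$: under $(f_1)$--$(f_4)$ the nonlinearity is only continuous, not Lipschitz, so the derivative argument you invoke is not available --- but, as the monotone rearrangement above shows, it is also not needed.
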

\begin{proof}
For any $u\in H_\varepsilon\setminus\{0\}$, we first define a function
\begin{equation*}
\begin{split}
\mathcal{J}_\infty(tu):&=\frac{t^2}{2}\big(\int_{\mathbb{R}^3}|\nabla u|^2dx+\|V\|_\infty\int_{\mathbb{R}^3}u^2dx\big)+\frac{t^4}{4}\|K\|_\infty^2\int_{\mathbb{R}^3}
\big[\frac{1}{|x|}*u^2\big]u^2dx-\int_{\mathbb{R}^3}F(tu)dx.
\end{split}
\end{equation*}
Thus, we have
\begin{equation*}
\begin{split}
\mathcal{J}_\infty(tu)\rightarrow -\infty~\text{as}~t\rightarrow \infty,
\end{split}
\end{equation*}
which implies that there exists $t_0>0$ large enough such that
\begin{equation*}
\mathcal{J}_\infty(t_0u)<-2.
\end{equation*}
Hence there is $R_0>0$ such that
\begin{equation*}
\frac{t_0^2}{2}\big(\int_{\mathbb{R}^3}|\nabla u|^2dx+\|V\|_\infty\int_{\mathbb{R}^3}u^2dx\big)+\frac{t_0^4}{4}\|K\|_\infty^2\int_{\mathbb{R}^3}
\big[\frac{1}{|x|}*u^2\big]u^2dx-\int_{B_{R_0}}F(t_0u)dx<-1.
\end{equation*}
Therefore,
\begin{equation*}
\begin{split}
\tilde{\Phi}_\varepsilon(t_0u)&=\frac{t^2}{2}\|u\|_\varepsilon^2+\frac{t_0^4}{4}\int_{\mathbb{R}^3}
\big[\frac{1}{|x|}*K(\varepsilon x)u^2\big]K(\varepsilon x)u^2dx-\int_{\mathbb{R}^3}G(\varepsilon x,t_0u)dx\\
&\leq\frac{t_0^2}{2}\|u\|_\varepsilon^2+\frac{t_0^4}{4}\|K\|_\infty^2\int_{\mathbb{R}^3}
\big[\frac{1}{|x|}*u^2\big]u^2dx-\int_{\Lambda_\varepsilon}F(t_0u)dx\\
&\leq\frac{t_0^2}{2}\big(\int_{\mathbb{R}^3}|\nabla u|^2dx+\|V\|_\infty\int_{\mathbb{R}^3}u^2dx\big)+\frac{t_0^4}{4}\|K\|_\infty^2\int_{\mathbb{R}^3}
\big[\frac{1}{|x|}*u^2\big]u^2dx-\int_{B_{R_0}}F(t_0u)dx\\
&<-1.
\end{split}
\end{equation*}
Thus, we have
\begin{equation*}
\tilde{c}_\varepsilon\leq\inf_{u\in H_\varepsilon\setminus\{0\}}\max_{t\geq 0}\tilde{\Phi}_\varepsilon(tu)
\end{equation*}
On the other hand, it is easy to see that $t\mapsto \tilde{\Phi}_\varepsilon(tu)$ has at most one nontrivial critical point $t=t(u)>0$. As in \cite[Lemma 1.2]{Pino1996CVPDE}, we define
\begin{equation*}
\mathcal{ M}_\varepsilon:=\{t(u)u:u\in H_\varepsilon\setminus \{0\},t(u)<\infty\}.
\end{equation*}
Then
\begin{equation*}
\inf_{u\in H_\varepsilon\setminus\{0\}}\max_{t\geq 0}\tilde{\Phi}_\varepsilon(tu)=\inf_{v\in \mathcal{M}_\varepsilon}\tilde{\Phi}_\varepsilon(v).
\end{equation*}
Thus we only need to show that given $\gamma\in \tilde{\Gamma}_\varepsilon$, there exists $\tilde{t}\in [0,1]$ such that $\gamma(\tilde{t})\in \mathcal{M}_\varepsilon$. Similar to the proof in Lemma \ref{Lem2.6}, this completes the proof.
\end{proof}

\begin{Lem} \label{Lem3.4}
There exists $\varepsilon^*>0$ such that, for all $\varepsilon\in (0,\varepsilon^*)$, there exist $\{y_\varepsilon\}\subset \mathbb{R}^3$ and $R$, $\delta>0$ such that
\begin{equation*}
\int_{B_R(y_\varepsilon)}u^2dx\geq \delta.
\end{equation*}
\end{Lem}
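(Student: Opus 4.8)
The plan is to argue by contradiction, showing that the bounded $(PS)_{\tilde c_\varepsilon}$ sequence $\{u_n\}$ cannot vanish; the concentration points $y_\varepsilon$ and the function $u$ in the statement are then read off from this non-vanishing after passing to a subsequence (so $y_\varepsilon$ stands for the extracted points and $u$ for a representative $u_n$, equivalently the translated weak limit). By Lemma \ref{Lem3.2} the sequence is bounded in $H_\varepsilon$. Suppose, for contradiction, that vanishing occurs, i.e.
\[
\lim_{n\to\infty}\sup_{y\in\mathbb{R}^3}\int_{B_R(y)}u_n^2\,dx=0\qquad\text{for every }R>0.
\]
By the Lions vanishing lemma this forces $u_n\to0$ strongly in $L^q(\mathbb{R}^3)$ for every $q\in(2,6)$; in particular $\|u_n\|_p\to0$, since $p\in(4,6)$.

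Next I would exploit the Palais--Smale identity $\tilde\Phi_\varepsilon'(u_n)u_n=o_n(1)$. Because the nonlocal term is nonnegative, this yields
\[
\|u_n\|_\varepsilon^2\le\int_{\mathbb{R}^3}g(\varepsilon x,u_n)u_n\,dx+o_n(1).
\]
Splitting the right-hand side over $\Lambda_\varepsilon$ and $\Lambda_\varepsilon^c$ and using property $(g_4)$, which gives $g(\varepsilon x,t)t\le\tfrac1\kappa V(\varepsilon x)t^2$ on $\Lambda_\varepsilon^c$, I absorb the exterior contribution into $\tfrac1\kappa\|u_n\|_\varepsilon^2$ to obtain
\[
\Big(1-\tfrac1\kappa\Big)\|u_n\|_\varepsilon^2\le\int_{\Lambda_\varepsilon}f(u_n)u_n\,dx+o_n(1).
\]
On $\Lambda_\varepsilon$ I bound $f$ by its growth $f(t)t\le\epsilon t^2+C_\epsilon t^p$ (Remark \ref{Rek1.1}) and control the quadratic part via $\|u_n\|_2^2\le V_1^{-1}\|u_n\|_\varepsilon^2$ from $(V_1)$. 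Choosing the free constant $\epsilon<V_1(1-\tfrac1\kappa)$ lets me absorb the quadratic term on the left, leaving
\[
\Big(1-\tfrac1\kappa-\tfrac{\epsilon}{V_1}\Big)\|u_n\|_\varepsilon^2\le C_\epsilon\|u_n\|_p^p+o_n(1)\longrightarrow0.
\]

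Since the prefactor is strictly positive, this gives $\|u_n\|_\varepsilon\to0$; then the nonlocal term (which is $\le C\|u_n\|_\varepsilon^4$ by Hardy--Littlewood--Sobolev) and $\int_{\mathbb{R}^3}G(\varepsilon x,u_n)\,dx\le\int_{\mathbb{R}^3}F(u_n)\,dx\le\epsilon\|u_n\|_2^2+C_\epsilon\|u_n\|_p^p$ both tend to zero, so $\tilde\Phi_\varepsilon(u_n)\to0$, contradicting $\tilde\Phi_\varepsilon(u_n)\to\tilde c_\varepsilon>0$. Hence vanishing fails and there are $R,\delta>0$ and points $y_n\in\mathbb{R}^3$ with $\liminf_n\int_{B_R(y_n)}u_n^2\,dx\ge\delta$, which is the assertion. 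The main obstacle to watch is that vanishing only kills the $L^q$-norms for $q>2$ and not the $L^2$-mass; the penalization property $(g_4)$ together with the small-constant splitting is precisely what converts this into decay of the full norm $\|u_n\|_\varepsilon$. Finally, every constant above depends only on $V_1,\kappa,p$ and Sobolev constants, none of them on $\varepsilon$, so $R$ and $\delta$ can be taken uniform; I fix $\varepsilon^*$ so that the mountain-pass geometry of Lemma \ref{Lem3.1}, and hence the uniform lower bound $\tilde c_\varepsilon\ge\alpha>0$, holds for all $\varepsilon\in(0,\varepsilon^*)$.
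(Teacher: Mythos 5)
Your argument is correct and follows the same overall strategy as the paper's proof: negate the conclusion, apply Lions' concentration-compactness lemma to get strong $L^q$-vanishing for $2<q<6$, and derive a contradiction from the identity $\tilde\Phi_\varepsilon'(u)u=o(1)$. The difference lies in how the term $\int_{\mathbb{R}^3}g(\varepsilon x,u)u\,dx$ is disposed of. The paper asserts that this integral tends to zero directly "by Remark \ref{Rek1.1}", which is not literally justified: the available bounds $f(t)t\le\epsilon t^2+C_\epsilon t^p$ and $f_*(t)t\le\frac{V_0}{\kappa}t^2$ carry a quadratic part, and the $L^2$-mass is precisely what Lions' lemma does \emph{not} control. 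Your splitting over $\Lambda_\varepsilon$ and $\Lambda_\varepsilon^c$ --- absorbing the exterior contribution through $(g_4)$ into $\frac{1}{\kappa}\|u\|_\varepsilon^2$ and the interior quadratic piece through the choice $\epsilon<V_1\bigl(1-\frac{1}{\kappa}\bigr)$ --- is exactly the repair this step needs, and it yields the clean conclusion $\|u\|_\varepsilon\to0$. From there you contradict the uniform positive lower bound $\tilde c_\varepsilon\ge\alpha>0$ coming from the mountain pass geometry, whereas the paper contradicts a uniform lower bound $\|u_{\varepsilon_j}\|_{\varepsilon_j}\ge r^*>0$; these endpoints are interchangeable. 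One bookkeeping remark: the paper runs the contradiction along a sequence $\varepsilon_j\to0$ applied to the functions $u_{\varepsilon_j}$ themselves, while you run it for the $(PS)$ sequence at a fixed $\varepsilon$; given that the statement of the lemma does not specify which function $u$ is, both readings are legitimate, and your observation that all constants involved depend only on $V_1$, $\kappa$, $p$ and Sobolev constants is what guarantees that $R$ and $\delta$ can be chosen uniformly in $\varepsilon$.
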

\begin{proof}
Assume by contradiction that there exists a sequence $\varepsilon_j\rightarrow 0$ as $j\rightarrow \infty$, such that for any $R>0$,
\begin{equation*}
\lim_{j\rightarrow\infty}\sup_{y\in \mathbb{R}^3}\int_{B_R(y)}u_{\varepsilon_j}^2dx=0.
\end{equation*}
Thus, by Lion's concentration principle \cite[Lemma 1.1]{lions1}, we have
\begin{equation*}
u_{\varepsilon_j}\rightarrow0\ \text{in}\ L^r(\mathbb{R}^3)\ \text{for}\ 2<r<6.
\end{equation*}
Thus, by Remark \ref{Rek1.1}
 \begin{equation*}
\int_{\mathbb{R}^3}g(\varepsilon_j x,u_{\varepsilon_j})u_{\varepsilon_j}dx\rightarrow 0~\hbox{as}~j\rightarrow\infty,
\end{equation*}
and by Lemma \ref{Lem2.2}
 \begin{equation*}
\int_{\mathbb{R}^3}
\big[\frac{1}{|x|}*K(\varepsilon_j x)u_{\varepsilon_j}^2\big]K(\varepsilon_j x)u_{\varepsilon_j}^2dx\rightarrow 0~\hbox{as}~j\rightarrow\infty.
\end{equation*}
Therefore,
\begin{equation}\label{eq3.6}
\|u_{\varepsilon_j}\|_{\varepsilon_j}^2=-\int_{\mathbb{R}^3}\big[\frac{1}{|x|}*K(\varepsilon_j x)u_{\varepsilon_j}^2\big]K(\varepsilon_j x)u_{\varepsilon_j}^2dx
-\int_{\mathbb{R}^3}g(\varepsilon_j x,u_{\varepsilon_j})u_{\varepsilon_j}dx\rightarrow0.
\end{equation}
Moreover, it is not difficulty to check that exists constant $r^*$ such that
\begin{equation*}
\|u_{\varepsilon_j}\|_{\varepsilon_j} \geq r^{*}>0,
\end{equation*}
which is a contradiction with \eqref{eq3.6}.
\end{proof}

\begin{Lem}\label{Lem3.5}
The functional $\tilde{\Phi}_\varepsilon$ satisfies the $(PS)_c$ condition.
\end{Lem}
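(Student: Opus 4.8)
The plan is to take an arbitrary $(PS)_c$ sequence $\{u_n\}\subset H_\varepsilon$ and extract a strongly convergent subsequence. By Lemma \ref{Lem3.2} the sequence is bounded, so up to a subsequence $u_n\rightharpoonup u$ in $H_\varepsilon$, $u_n\to u$ in $L^r_{loc}(\mathbb{R}^3)$ for $2\le r<6$, and $u_n\to u$ a.e. The first (routine) step is to pass to the limit in $\tilde\Phi_\varepsilon'(u_n)\varphi$ for $\varphi\in C_c^\infty(\mathbb{R}^3)$: local $L^r$ convergence together with the subcritical growth from Remark \ref{Rek1.1} handles the nonlinear term $\int_{\mathbb{R}^3}g(\varepsilon x,u_n)\varphi\,dx$, while the Hardy--Littlewood--Sobolev inequality (Lemma \ref{Lem2.2}) and the boundedness of $K$ give enough convergence of $\phi_{u_n}$ to pass to the limit in the nonlocal term. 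This yields $\tilde\Phi_\varepsilon'(u)=0$.

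The decisive step is a tightness estimate ruling out loss of mass at infinity. Here I use crucially that $\Lambda$ is bounded, so $\Lambda_\varepsilon=\{x:\varepsilon x\in\Lambda\}$ is bounded for fixed $\varepsilon$; hence for $R$ large an annular cut-off $\eta_R$ (with $\eta_R\equiv0$ on $B_R$, $\eta_R\equiv1$ on $B_{2R}^c$, $|\nabla\eta_R|\le C/R$) is supported in $\Lambda_\varepsilon^c$. Testing $\tilde\Phi_\varepsilon'(u_n)$ against $\eta_R u_n$, the cross term $\int u_n\nabla u_n\cdot\nabla\eta_R$ is $O(1/R)$, the nonlocal term $\int\phi_{u_n}K(\varepsilon x)\eta_R u_n^2\ge0$ since $\phi_{u_n},K\ge0$, and property $(g_4)$ gives $g(\varepsilon x,u_n)\eta_R u_n\le\frac1\kappa V(\varepsilon x)\eta_R u_n^2$ on the support of $\eta_R$. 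Combining these,
\[
\Big(1-\tfrac1\kappa\Big)\int_{B_{2R}^c}\big(|\nabla u_n|^2+V(\varepsilon x)u_n^2\big)\,dx\le o_R(1)+o_n(1),
\]
so that, since $\kappa>1$, the tails of $\|u_n\|_\varepsilon^2$ are uniformly small.

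Finally I would combine the tightness with the criticality of $u$ to upgrade weak to strong convergence. Writing $(\tilde\Phi_\varepsilon'(u_n)-\tilde\Phi_\varepsilon'(u))(u_n-u)=o(1)$ (the left side vanishes because $\tilde\Phi_\varepsilon'(u_n)\to0$ in $H_\varepsilon^{-1}$ on the bounded sequence $u_n-u$, and $u_n-u\rightharpoonup0$ against $\tilde\Phi_\varepsilon'(u)=0$), this expands to
\[
\|u_n-u\|_\varepsilon^2=\int_{\mathbb{R}^3}\big[g(\varepsilon x,u_n)-g(\varepsilon x,u)\big](u_n-u)\,dx-(\text{nonlocal difference})+o(1).
\]
The nonlinear integral is split over $B_R$ and $B_R^c$: on $B_R$ local $L^r$ convergence and the subcritical bound of Remark \ref{Rek1.1} send it to $0$, while on $B_R^c$ the tail estimate of the previous step makes it arbitrarily small uniformly in $n$; the nonlocal difference is controlled identically via Lemma \ref{Lem2.2}, the boundedness of $K$, and the same localization. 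Hence $\|u_n-u\|_\varepsilon\to0$.

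The main obstacle, as usual for problems posed on all of $\mathbb{R}^3$, is precisely the loss of compactness at infinity; the penalization built into $g$ --- concretely the inequality $(g_4)$ on $\Lambda_\varepsilon^c$ together with the boundedness of $\Lambda_\varepsilon$ --- is exactly what makes the tightness estimate work, and this is the heart of the argument. The nonlocal Poisson term requires some care, but thanks to the good integrability furnished by the Hardy--Littlewood--Sobolev inequality it does not create an additional essential difficulty.
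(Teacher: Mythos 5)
Your argument is correct, but it reaches the conclusion by a genuinely different mechanism than the paper. The paper never introduces cut-off functions: it compares the two identities $\tilde{\Phi}_\varepsilon'(u_n)u_n=o_n(1)$ and $\tilde{\Phi}_\varepsilon'(u)u=0$, rewritten so that the contribution of the nonlinearity on $\Lambda_\varepsilon^c$ appears on the left as the nonnegative quantity $M(x,t)=V(x)t^2-g(x,t)t$ from $(g_4)$. Compactness of the embedding on the bounded set $\Lambda_\varepsilon$ handles $\int_{\Lambda_\varepsilon}f(u_n)u_n$ and $\int_{\Lambda_\varepsilon}V(\varepsilon x)u_n^2$, and then a Fatou/weak-lower-semicontinuity bookkeeping argument forces each of the remaining nonnegative terms (the gradient term, the nonlocal term, and $\int_{\Lambda_\varepsilon^c}M$) to converge to its value at $u$; the two-sided bound $M(x,t)\le V(x)t^2\le\frac{\kappa}{\kappa-1}M(x,t)$ then upgrades this to convergence of $\|u_n\|_\varepsilon$, and norm convergence plus weak convergence gives strong convergence. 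You instead prove an explicit uniform tail estimate by testing against $\eta_R u_n$ with an annular cut-off supported in $\Lambda_\varepsilon^c$, and then run the standard $(\tilde{\Phi}_\varepsilon'(u_n)-\tilde{\Phi}_\varepsilon'(u))(u_n-u)\to0$ splitting over $B_R$ and $B_R^c$. Both routes exploit exactly the same structural input --- the penalization inequality $(g_4)$ and the boundedness of $\Lambda_\varepsilon$ for fixed $\varepsilon$ --- but yours delivers as a by-product the quantitative tightness $\int_{B_{2R}^c}(|\nabla u_n|^2+V(\varepsilon x)u_n^2)\,dx\le o_R(1)+o_n(1)$, which also immediately gives $u_n\to u$ in $L^{12/5}(\mathbb{R}^3)$ and hence a cleaner treatment of the nonlocal difference, whereas the paper's version avoids all cut-off computations at the cost of a less transparent chain of $\limsup$/$\liminf$ inequalities. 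The one point you should spell out if you write this up in full is the passage to the limit in the nonlocal difference: the Hardy--Littlewood--Sobolev bound alone leaves a factor $\|u_n-u\|_{12/5}$, and it is precisely your tail estimate (combined with local compactness) that makes this factor vanish.
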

\begin{proof}
Let $\{u_n\}\subset H_\varepsilon$ be a $(PS)_c$ sequence for $\tilde{\Phi}_\varepsilon$, it follows from Lemmas \ref{Lem3.2}-\ref{Lem3.4} that $\{u_n\}$ is bounded in $H_\varepsilon$ and we can assume that there is $u\in H_\varepsilon$ such that
\begin{equation}\label{eq3.5}
\begin{aligned}
&u_n\rightharpoonup u\quad\hbox{in}~H_\varepsilon\\
&u_n(x)\to u(x)\quad\hbox{a.e. in}\  \mathbb{R}^3,\\
&u_n\rightarrow u
\quad\hbox{in}\  L^{q}_{loc}(\mathbb{R}^3), \ \hbox{for }\ 1\leq q<6
\end{aligned}
\end{equation}
and $\tilde{\Phi}_\varepsilon^\prime(u)=0$. Thus,
\begin{equation*}
\int_{\mathbb{R}^3}|\nabla u|^2dx+\int_{\Lambda_\varepsilon}V(\varepsilon x)u^2dx+\int_{\mathbb{R}^3}
\big[\frac{1}{|x|}*K(\varepsilon x)u^2\big]K(\varepsilon x)u^2dx+
\int_{\Lambda_\varepsilon^c}M(\varepsilon x,u)dx=\int_{\Lambda_\varepsilon}f(u)udx.
\end{equation*}
On the other hand, using the limit $\Phi_\varepsilon^\prime(u_n)u_n=o_n(1)$, we deduce that
\begin{equation*}
\int_{\mathbb{R}^3}|\nabla u_n|^2dx+\int_{\Lambda_\varepsilon}V(\varepsilon x)u_n^2dx+\int_{\mathbb{R}^3}
\big[\frac{1}{|x|}*K(\varepsilon x)u_n^2\big]K(\varepsilon x)u_n^2dx+
\int_{\Lambda_\varepsilon^c}M(x,u_n)dx=\int_{\Lambda_\varepsilon}f(u_n)u_ndx+o_n(1).
\end{equation*}
Since $\Lambda_\varepsilon$ is bounded, the compactness of the Sobolev embedding gives
\begin{equation*}
\lim_{n\rightarrow \infty}\int_{\Lambda_\varepsilon}f(u_n)u_ndx=\int_{\Lambda_\varepsilon}f(u)udx
\end{equation*}
and
\begin{equation}\label{eq3.7}
\lim_{n\rightarrow \infty}\int_{\Lambda_\varepsilon}V(\varepsilon x)u_n^2dx=\int_{\Lambda_\varepsilon}V(\varepsilon x)u^2dx.
\end{equation}
Therefore,
\begin{equation*}
\begin{split}
&\limsup_{n\rightarrow\infty}\int_{\mathbb{R}^3}|\nabla u_n|^2dx+\int_{\mathbb{R}^3}
\big[\frac{1}{|x|}*K(\varepsilon x)u_n^2\big]K(\varepsilon x)u_n^2dx+
\int_{\Lambda_\varepsilon^c}M(x,u_n)dx\\
&=\int_{\mathbb{R}^3}|\nabla u|^2dx+\int_{\mathbb{R}^3}
\big[\frac{1}{|x|}*K(\varepsilon x)u^2\big]K(\varepsilon x)u^2dx+
\int_{\Lambda_\varepsilon^c}M(x,u)dx.
\end{split}
\end{equation*}
Now, recalling that $M(x,t)\geq 0$, the Fatou's lemma lead to
\begin{equation*}
\begin{split}
&\liminf_{n\rightarrow\infty}\int_{\mathbb{R}^3}|\nabla u_n|^2dx+\int_{\mathbb{R}^3}
\big[\frac{1}{|x|}*K(\varepsilon x)u_n^2\big]K(\varepsilon x)u_n^2dx+
\int_{\Lambda_\varepsilon^c}M(x,u_n)dx\\
&\geq\int_{\mathbb{R}^3}|\nabla u|^2dx+\int_{\mathbb{R}^3}
\big[\frac{1}{|x|}*K(\varepsilon x)u^2\big]K(\varepsilon x)u^2dx+
\int_{\Lambda_\varepsilon^c}M(x,u)dx.
\end{split}
\end{equation*}
Hence,
\begin{equation*}
\begin{split}
&\int_{\mathbb{R}^3}|\nabla u|^2dx\leq\liminf_{n\rightarrow\infty}\int_{\mathbb{R}^3}|\nabla u_n|^2dx\leq \limsup_{n\rightarrow\infty}\int_{\mathbb{R}^3}|\nabla u_n|^2dx\\
&\leq \limsup_{n\rightarrow\infty}\int_{\mathbb{R}^3}|\nabla u_n|^2dx+\liminf_{n\rightarrow\infty}\int_{\Lambda_\varepsilon^c}M(x, u_n)dx-\int_{\Lambda_\varepsilon^c}M(x,u)dx\\
&\quad+\liminf_{n\rightarrow\infty}\int_{\mathbb{R}^3}
\big[\frac{1}{|x|}*K(\varepsilon x)u_n^2\big]K(\varepsilon x)u_n^2dx-\int_{\mathbb{R}^3}
\big[\frac{1}{|x|}*K(\varepsilon x)u^2\big]K(\varepsilon x)u^2dx\\
&\leq\limsup_{n\rightarrow\infty}\bigg(\int_{\mathbb{R}^3}|\nabla u_n|^2dx+\int_{\Lambda_\varepsilon^c}M(x, u_n)dx+\int_{\mathbb{R}^3}
\big[\frac{1}{|x|}*K(\varepsilon x)u_n^2\big]K(\varepsilon x)u_n^2dx\bigg)\\
&\quad-\int_{\Lambda_\varepsilon^c}M(x,u)dx-\int_{\mathbb{R}^3}
\big[\frac{1}{|x|}*K(\varepsilon x)u^2\big]K(\varepsilon x)u^2dx\\
&=\int_{\mathbb{R}^3}|\nabla u|^2dx
\end{split}
\end{equation*}
which implies that
\begin{equation*}
\lim_{n\rightarrow\infty}\int_{\mathbb{R}^3}|\nabla u_n|^2dx=\int_{\mathbb{R}^3}|\nabla u|^2dx.
\end{equation*}
Similarly,
\begin{equation*}
\lim_{n\rightarrow\infty}\int_{\Lambda_\varepsilon^c}M(x,u_n)dx=\int_{\Lambda_\varepsilon^c}M(x,u)dx
\end{equation*}
and
\begin{equation*}
\lim_{n\rightarrow\infty}\int_{\mathbb{R}^3}
\big[\frac{1}{|x|}*K(\varepsilon x)u_n^2\big]K(\varepsilon x)u_n^2dx=\int_{\mathbb{R}^3}
\big[\frac{1}{|x|}*K(\varepsilon x)u^2\big]K(\varepsilon x)u^2dx.
\end{equation*}
The last limit combined with the fact
\begin{equation*}
M(x,t)\leq V(x)t^2\leq \frac{\kappa}{\kappa-1}M(x,t)~\text{for~any}~(x,t)\in \Lambda^c\times \mathbb{R},
\end{equation*}
yields
\begin{equation}\label{eq3.8}
\lim_{n\rightarrow\infty}\int_{\Lambda_\varepsilon^c}V(\varepsilon x)u_n^2dx=\int_{\Lambda_\varepsilon^c}V(\varepsilon x)u^2dx.
\end{equation}
Hence, \eqref{eq3.7} and \eqref{eq3.8} imply that
\begin{equation*}
\lim_{n\rightarrow\infty}\int_{\mathbb{R}^3}V(\varepsilon x)u_n^2dx=\int_{\mathbb{R}^3}V(\varepsilon x)u^2dx.
\end{equation*}
Thus,
\begin{equation*}
\lim_{n\rightarrow\infty}\big(\int_{\mathbb{R}^3}|\nabla v_n|^2dx+\int_{\mathbb{R}^3}v_n^2dx\big)=\int_{\mathbb{R}^3}| \nabla u|^2dx+\int_{\mathbb{R}^3}u^2dx.
\end{equation*}
Together with $v_n\rightharpoonup u$ in $H^1(\mathbb{R}^3)$, we have $v_n\rightarrow u$ in $H^1(\mathbb{R}^3)$.
\end{proof}
\par

\begin{Lem}\label{Lem3.6}
The functional $\tilde{\Phi}_\varepsilon$ has a positive critical point $u_\varepsilon$ such that
\begin{equation*}
\tilde{\Phi}_\varepsilon(u_\varepsilon)=\tilde{c}_\varepsilon~~\text{and}~~\tilde{\Phi}_\varepsilon^\prime(u_\varepsilon)=0.
\end{equation*}
\end{Lem}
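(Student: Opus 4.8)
The plan is to obtain $u_\varepsilon$ as a mountain pass critical point and then upgrade it to a positive one. First, by Lemma \ref{Lem3.1} the functional $\tilde{\Phi}_\varepsilon$ has the mountain pass geometry, so the version of the Mountain Pass Theorem without the $(PS)$ condition yields the $(PS)_{\tilde{c}_\varepsilon}$ sequence $\{u_n\}$ of \eqref{eq3.4} at the level $\tilde{c}_\varepsilon>0$. By Lemma \ref{Lem3.5} the functional satisfies the $(PS)_{\tilde{c}_\varepsilon}$ condition, so up to a subsequence $u_n\to u_\varepsilon$ strongly in $H_\varepsilon$. Passing to the limit in \eqref{eq3.4} and using the continuity of $\tilde{\Phi}_\varepsilon$ and $\tilde{\Phi}_\varepsilon^\prime$, we conclude $\tilde{\Phi}_\varepsilon(u_\varepsilon)=\tilde{c}_\varepsilon$ and $\tilde{\Phi}_\varepsilon^\prime(u_\varepsilon)=0$. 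Since $\tilde{c}_\varepsilon>0=\tilde{\Phi}_\varepsilon(0)$, necessarily $u_\varepsilon\neq 0$.

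Next I would prove that $u_\varepsilon\geq 0$. Write $u_\varepsilon^-:=\min\{u_\varepsilon,0\}$ and test the identity $\tilde{\Phi}_\varepsilon^\prime(u_\varepsilon)u_\varepsilon^-=0$. By $(f_1)$ we have $f(t)=0$ for $t\leq 0$, and since $f_*(t)=f(t)$ for $t\leq a$, the modified nonlinearity satisfies $g(\varepsilon x,t)=0$ for all $t\leq 0$; hence the term $\int_{\mathbb{R}^3}g(\varepsilon x,u_\varepsilon)u_\varepsilon^-\,dx$ vanishes. Moreover, because $K\geq 0$ and $\frac{1}{|x|}*K(\varepsilon x)u_\varepsilon^2\geq 0$, the nonlocal contribution $\int_{\mathbb{R}^3}\big[\frac{1}{|x|}*K(\varepsilon x)u_\varepsilon^2\big]K(\varepsilon x)u_\varepsilon u_\varepsilon^-\,dx$ is nonnegative. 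Consequently
\begin{equation*}
0=\tilde{\Phi}_\varepsilon^\prime(u_\varepsilon)u_\varepsilon^-\geq \|u_\varepsilon^-\|_\varepsilon^2,
\end{equation*}
which forces $u_\varepsilon^-=0$, that is $u_\varepsilon\geq 0$.

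Finally, strict positivity follows from the maximum principle. Since $u_\varepsilon\geq 0$, $K\geq 0$ and $\phi_{u_\varepsilon}\geq 0$, equation \eqref{eq3.3} gives $-\Delta u_\varepsilon+\big(V(\varepsilon x)+K(\varepsilon x)\phi_{u_\varepsilon}\big)u_\varepsilon=g(\varepsilon x,u_\varepsilon)\geq 0$, where $V(\varepsilon x)+K(\varepsilon x)\phi_{u_\varepsilon}\geq V_1>0$. By elliptic regularity $u_\varepsilon$ is continuous, and the strong maximum principle then yields $u_\varepsilon>0$; otherwise $u_\varepsilon\equiv 0$, contradicting $u_\varepsilon\neq 0$.

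I do not expect a genuine obstacle in this step: the only delicate point, namely verification of the $(PS)_{\tilde{c}_\varepsilon}$ condition, has already been settled in Lemma \ref{Lem3.5}. The remaining work is the routine assembly of the Mountain Pass Theorem together with the sign argument for positivity, so the main subtlety to keep track of is merely ensuring that the modified nonlinearity $g(\varepsilon x,\cdot)$ inherits the vanishing of $f$ on $(-\infty,0]$ through the definition of $f_*$.
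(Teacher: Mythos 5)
Your argument is correct and follows essentially the same route as the paper: existence via the mountain pass theorem combined with the $(PS)_{\tilde c_\varepsilon}$ condition of Lemma \ref{Lem3.5}, nonnegativity by testing $\tilde{\Phi}_\varepsilon^\prime(u_\varepsilon)u_\varepsilon^-=0$ using that $g(\varepsilon x,t)=0$ for $t\le 0$ and that the nonlocal term contributes a nonnegative quantity, and strict positivity by the strong maximum principle. You have merely written out in detail the steps the paper leaves implicit, so there is nothing to add.
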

\begin{proof}
From Lemma \ref{Lem3.1} and \eqref{eq3.4}, we know that there is a nontrivial $u_\varepsilon\in H_\varepsilon$ such that
\begin{equation*}
\tilde{\Phi}_\varepsilon(u_\varepsilon)=\tilde{c}_\varepsilon~~\text{and}~~\tilde{\Phi}_\varepsilon^\prime(u_\varepsilon)=0.
\end{equation*}
Moreover, the function $u_\varepsilon$ is nonnegative, because
\begin{equation*}
\tilde{\Phi}_\varepsilon^\prime(u_\varepsilon)u_\varepsilon^-=0\Rightarrow u_\varepsilon^-=0,
\end{equation*}
where $u_\varepsilon^-=\min \{u_\varepsilon(x),0\}$. Thus, by strong maximal principle, we get a positive critical point.
\end{proof}
\par

To find a new upper bound for $\tilde{c}_\varepsilon$, we need to investigate the "limit" problem:
\begin{equation*}
-\Delta u+V_0u=f(u),\quad \text{in}~\mathbb{R}^3.
\end{equation*}
Firstly, denote the standard norm of $H_{V_0}$ by
$$
\|u\|_{V_0}:=\left(\int_{\mathbb{R}^3}|\nabla u|^{2}+V_0u^2 d x\right)^{1 / 2}
$$
The associated energy functional is
\begin{equation*}
\mathcal{J}_{V_0}(u)=
\frac{1}{2}\int_{\mathbb{R}^3}|\nabla u|^2dx+\frac{V_0}{2}\int_{\mathbb{R}^3}u^2dx-\int_{\mathbb{R}^3}F(u)dx.
\end{equation*}
It is easy to check that $\mathcal{J}_{V_0}$ is well defined on $H_{V_0}$ and
$\mathcal{J}_{V_0}\in C^{1}\left(H_{V_0}, \mathbb{R}\right) .$ Then we can define
$$
\mathcal{N}^{V_0}=\left\{u \in H_{V_0} \backslash\{0\} \mid\left\langle \mathcal{J}_{V_0}^{\prime}(u), u\right\rangle=0\right\}
$$
and
$$
c_{V_0}=\inf _{u \in \mathcal{N}^{V_0}} \mathcal{J}_{V_0}(u).
$$

\begin{Rek}
For $c_{V_0}, \mathcal{J}_{V_0}$ and $\mathcal{N}^{V_0}$, there are similar results obtained from Lemma \ref{Lem3.1} to Lemma \ref{Lem3.6}. By Mountain pass Theorem, we can see that there exists $u \in H_{V_0}$ such that $\mathcal{J}_{V_0}(u)=c_{V_0}$ and $\mathcal{J}_{V_0}^{\prime}(u)=0$.
\end{Rek}
\par
The next lemma establishes an important relation between $\tilde{c}_\varepsilon$ and $c_{V_0}$.
\begin{Lem}\label{Lem3.7}
$\limsup\limits_{\varepsilon\rightarrow 0}\tilde{c}_\varepsilon\leq c_{V_0}$.
\end{Lem}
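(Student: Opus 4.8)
The plan is to produce, for each small $\varepsilon$, an explicit test function along whose ray the maximum of $\tilde{\Phi}_\varepsilon$ is close to $c_{V_0}$, and then invoke the variational characterization $\tilde{c}_\varepsilon=\inf_{u\in H_\varepsilon\setminus\{0\}}\max_{t\geq0}\tilde{\Phi}_\varepsilon(tu)$ from Lemma \ref{Lem3.3}. The natural candidate is a positive ground state $w$ of the limit problem, localized near the origin, where the data degenerate favorably: since $0\in\Lambda$ and $V(0)=V_0$ we have $0\in\mathcal{M}$, whence $K(0)=0$ by $(K)$. This vanishing of $K$ at the concentration point is exactly what makes the relevant comparison problem the local equation $-\Delta u+V_0u=f(u)$ rather than a nonlocal one.

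First I would fix a positive ground state $w\in H_{V_0}$ with $\mathcal{J}_{V_0}(w)=c_{V_0}$ and $\mathcal{J}_{V_0}^{\prime}(w)=0$, together with a smooth cutoff $\eta_R$ equal to $1$ on $B_R$ and vanishing off $B_{2R}$, and set $w_R=\eta_R w$. Because $\Lambda$ is open with $0\in\Lambda$, the rescaled set $\Lambda_\varepsilon=\{x:\varepsilon x\in\Lambda\}$ contains $B_{2R}$ once $\varepsilon$ is small, so $\operatorname{supp}w_R\subset\Lambda_\varepsilon$; there $\chi\equiv1$, hence $g(\varepsilon x,\cdot)=f$ and $G(\varepsilon x,tw_R)=F(tw_R)$.

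Then I would compare $\tilde{\Phi}_\varepsilon(tw_R)$ with $\mathcal{J}_{V_0}(tw_R)$ on the fixed compact set $B_{2R}$. As $\varepsilon\to0$, continuity gives $V(\varepsilon x)\to V_0$ and $K(\varepsilon x)\to K(0)=0$ uniformly on $B_{2R}$, so the potential term tends to $\tfrac{t^2}{2}V_0\int w_R^2$, while the nonlocal term is controlled via Lemma \ref{Lem2.2} by $C\,\|K(\varepsilon\cdot)\|_{L^\infty(B_{2R})}^2\,\|w_R\|_{12/5}^4\to0$. Thus $\tilde{\Phi}_\varepsilon(tw_R)\to\mathcal{J}_{V_0}(tw_R)$ uniformly for $t$ in bounded intervals. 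Since $\mu>4$ forces $F$ to dominate the quartic nonlocal term, $t\mapsto\tilde{\Phi}_\varepsilon(tw_R)$ attains its maximum at some $t_{\varepsilon,R}$ confined to a bounded interval independent of $\varepsilon$; hence $\max_{t\geq0}\tilde{\Phi}_\varepsilon(tw_R)\to\max_{t\geq0}\mathcal{J}_{V_0}(tw_R)$, and Lemma \ref{Lem3.3} yields $\limsup_{\varepsilon\to0}\tilde{c}_\varepsilon\leq\max_{t\geq0}\mathcal{J}_{V_0}(tw_R)$.

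Finally I would let $R\to\infty$. Since $w\in H_{V_0}$, a standard cutoff estimate gives $w_R\to w$ in $H_{V_0}$, and the Nehari projection sending $v$ to its unique maximizer on the ray $\mathbb{R}^+v$ depends continuously on $v$, so $\max_{t\geq0}\mathcal{J}_{V_0}(tw_R)\to\max_{t\geq0}\mathcal{J}_{V_0}(tw)=\mathcal{J}_{V_0}(w)=c_{V_0}$. Combining the two limits gives $\limsup_{\varepsilon\to0}\tilde{c}_\varepsilon\leq c_{V_0}$. The main obstacle is the interchange of the operations $\varepsilon\to0$ and $\max_t$: one must keep the maximizing parameter $t_{\varepsilon,R}$ in a compact set uniformly in $\varepsilon$ (and, in the last step, in $R$), which is precisely where the superquadratic growth from $(f_3)$ and the sign condition $K\geq0$ enter.
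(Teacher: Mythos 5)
Your proposal is correct and follows essentially the same route as the paper: both test $\tilde{c}_\varepsilon$ (via the characterization of Lemma \ref{Lem3.3}) against a cutoff of the ground state $w$ of the limit problem supported in $\Lambda_\varepsilon$, use $K(0)=0$ (from $(K)$ and $V(0)=V_0$) together with Hardy--Littlewood--Sobolev to kill the nonlocal term, and keep the ray maximizer in a compact interval. The only difference is organizational: the paper uses the single $\varepsilon$-dependent cutoff $w_\varepsilon(x)=\eta(\varepsilon x)w(x)$ and shows the Nehari parameter $t_\varepsilon\to1$, whereas you use a fixed-radius cutoff $w_R$ and take a double limit $\varepsilon\to0$, then $R\to\infty$ --- both are valid.
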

\begin{proof}
Let $w$ be a positive ground state solution of $\mathcal{J}_{V_0}$, that is, $w\in \mathcal{N}^{V_0}, \mathcal{J}_{V_0}(w)=c_{V_0}$ and $\mathcal{J}_{V_0}^\prime(w)=0$. Set
$w_\varepsilon(x)=\eta(\varepsilon x)w(x)$, where $\eta$ is a smooth function compactly supported in $\Lambda$, such that $\eta=1$ in a small neighborhood of origin in $\Lambda$. From the definition of $g$, one has
\begin{equation*}
\int_{\mathbb{R}^3}g(\varepsilon x,w_\varepsilon)w_\varepsilon dx=\int_{\mathbb{R}^3}f(w_\varepsilon)w_\varepsilon dx,~\text{and}~\int_{\mathbb{R}^3}G(\varepsilon x,w_\varepsilon)dx=\int_{\mathbb{R}^3}F(w_\varepsilon)dx.
\end{equation*}
Thus, Lemma \ref{Lem2.5} implies that there exists $t_\varepsilon>0$ such that $t_\varepsilon w_\varepsilon \in \mathcal{N}_\varepsilon$ and satisfying
\begin{equation}\label{eq3.9}
\tilde{\Phi}_\varepsilon(t_\varepsilon w_\varepsilon)=\max_{t \geq 0}\tilde{\Phi}_\varepsilon(t w_\varepsilon).
\end{equation}

By a standard argument, we know that there exist constants $T_1,T_2>0$ such that $T_1\leq t_\varepsilon\leq T_2$. So, we can assume that $t_\varepsilon\rightarrow t_0>0$ as $\varepsilon\rightarrow 0$. Note that, as $\varepsilon\rightarrow 0$
\begin{equation*}
\int_{\mathbb{R}^3}V(\varepsilon x)w_\varepsilon^2dx\rightarrow V_0\int_{\mathbb{R}^3}w^2dx,~\int_{\mathbb{R}^3}f(t_\varepsilon w_\varepsilon)wdx\rightarrow \int_{\mathbb{R}^3}f(t_0 w)wdx
\end{equation*}
and
\begin{equation*}
\int_{\mathbb{R}^3}
\big[\frac{1}{|x|}*K(\varepsilon x)w_\varepsilon^2\big]K(\varepsilon x)w_\varepsilon^2dx\rightarrow K^2(0)t_0^4\int_{\mathbb{R}^3}
\big[\frac{1}{|x|}*w^2\big]w^2dx=0
\end{equation*}
which implies that $t_0w\in \mathcal{N}^{V_0}$. Moreover, $w$ is also in $\mathcal{N}^{V_0}$. So, $t_0w=w$ and hence $t_0=1$ by the uniqueness. Therefore, it follows from Lemma \ref{Lem3.3} and \eqref{eq3.9} that
\begin{equation*}
\limsup\limits_{\varepsilon\rightarrow 0}\tilde{c}_\varepsilon\leq \limsup\limits_{\varepsilon\rightarrow 0}\max_{t\geq 0}\tilde{\Phi}_\varepsilon(t w_\varepsilon)=\limsup\limits_{\varepsilon\rightarrow 0}\tilde{\Phi}_\varepsilon(t_\varepsilon w_\varepsilon)=\mathcal{J}_{V_0}(w)=c_{V_0}.
\end{equation*}
This completes the proof.
\end{proof}

\begin{Lem}\label{Lem3.8}
For any $\varepsilon_n\rightarrow 0$, consider the sequence $\{y_{\varepsilon_n}\}\subset \mathbb{R}^3$ given in Lemma \ref{Lem3.4} and $v_n(x):=u_{\varepsilon_n}(x+y_{\varepsilon_n})$, with $u_{\varepsilon_n}$ obtained in Lemma \ref{Lem3.6}. Then, there is $u\in H^1(\mathbb{R}^3)\setminus \{0\}$ such that, up to a subsequence
\begin{equation*}
v_n\rightarrow u~\text{in}~H^1(\mathbb{R}^3).
\end{equation*}
Moreover, there is $x_0\in \Lambda$ such that
\begin{equation*}
\lim_{n\rightarrow \infty}\varepsilon_n y_{\varepsilon_n}=x_0\quad\text{and}\quad V(x_0)=V_0.
\end{equation*}
\end{Lem}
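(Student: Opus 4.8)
The plan is to run the standard blow-up/concentration analysis for the penalized critical points, with the energy bound of Lemma~\ref{Lem3.7} and the monotonicity of the limiting ground-state energies in Lemma~\ref{Lem2.7} as the two quantitative inputs; write $\tilde x_n:=\varepsilon_n y_{\varepsilon_n}$ throughout. First I would produce a nontrivial weak limit. Since $u_{\varepsilon_n}$ is the mountain-pass critical point of Lemma~\ref{Lem3.6} at level $\tilde c_{\varepsilon_n}$, Lemmas~\ref{Lem3.2}--\ref{Lem3.7} show $\{u_{\varepsilon_n}\}$ is bounded in $H_{\varepsilon_n}$; as $V\ge V_1>0$ and the $H^1$-norm is translation invariant, $\{v_n\}$ is bounded in $H^1(\mathbb{R}^3)$, so up to a subsequence $v_n\rightharpoonup u$ in $H^1(\mathbb{R}^3)$, $v_n\to u$ in $L^q_{loc}$ for $1\le q<6$ and a.e. The choice of $y_{\varepsilon_n}$ in Lemma~\ref{Lem3.4} gives $\int_{B_R(0)}v_n^2\,dx=\int_{B_R(y_{\varepsilon_n})}u_{\varepsilon_n}^2\,dx\ge\delta$, and local compactness yields $\int_{B_R(0)}u^2\,dx\ge\delta$, so $u\ne0$.

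Next I would show the concentration point cannot escape to $\Lambda^c$. Translating the equation for $u_{\varepsilon_n}$, the function $v_n$ solves
\begin{equation*}
-\Delta v_n+V(\varepsilon_n x+\tilde x_n)v_n+K(\varepsilon_n x+\tilde x_n)\psi_n v_n=g(\varepsilon_n x+\tilde x_n,v_n),
\end{equation*}
where $\psi_n(x)=\frac{1}{4\pi}\int_{\mathbb{R}^3}\frac{K(\varepsilon_n w+\tilde x_n)v_n^2(w)}{|x-w|}\,dw$. Passing to a further subsequence I may assume $\tilde x_n\to x_0\in\mathbb{R}^3\cup\{\infty\}$. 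To rule out $x_0\in\mathbb{R}^3\setminus\Lambda$ (including $x_0=\infty$): since $\Lambda$ is bounded, for each fixed $R>0$ one has $\varepsilon_n x+\tilde x_n\notin\Lambda$ for all $x\in B_R(0)$ and $n$ large, hence $g(\varepsilon_n x+\tilde x_n,\cdot)=f_*$ there. Letting $n\to\infty$ (using $v_n\to u$ in $L^2_{loc}$, the bound $V\ge V_1$, $K\ge0$, and a weak-$*$ limit $V_\infty$ of $V(\varepsilon_n\cdot+\tilde x_n)$), the nontrivial limit $u\ge0$ satisfies $-\Delta u+V_\infty u+K_\infty\phi_u u=f_*(u)$ with $V_\infty\ge V_1$ and $K_\infty\ge0$. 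Testing against $u$ and using $f_*(t)\le\frac{V_0}{\kappa}t$ gives
\begin{equation*}
\int_{\mathbb{R}^3}|\nabla u|^2\,dx+\int_{\mathbb{R}^3}\Big(V_\infty-\frac{V_0}{\kappa}\Big)u^2\,dx\le0 .
\end{equation*}
Fixing the penalization parameter at the outset so that $\kappa>V_0/V_1$ makes the second integrand strictly positive, forcing $u=0$, a contradiction. Hence $x_0\in\overline\Lambda$ is finite and $\{\tilde x_n\}$ is bounded.

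With $\tilde x_n$ bounded I would first upgrade to strong convergence $v_n\to u$ in $H^1(\mathbb{R}^3)$ by a uniform tail estimate: for $|x|$ large one has $\varepsilon_n x+\tilde x_n\notin\Lambda$, so the nonlinearity is controlled by $\frac{V_0}{\kappa}v_n$, and testing with a cut-off of $v_n$ outside a large ball together with the absorption $M(x,t)\ge(1-\frac1\kappa)V(x)t^2$ from $(g_4)$ yields $\sup_n\int_{|x|>\rho}(|\nabla v_n|^2+v_n^2)\,dx\to0$ as $\rho\to\infty$; combined with $v_n\to u$ in $L^2_{loc}$ this gives strong convergence. Then the energy passes to the limit. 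If $x_0\in\Lambda$, then $g(\varepsilon_n x+\tilde x_n,\cdot)=f$ on every ball eventually and $u$ is a nontrivial solution of the autonomous system \eqref{eq2.3} with $\mu=V(x_0)\ge V_0$, $\nu=K(x_0)\ge0$, so $c_{V(x_0)K(x_0)}\le\mathcal{J}_{V(x_0)K(x_0)}(u)=\lim_n\tilde c_{\varepsilon_n}$. Combining with Lemma~\ref{Lem3.7},
\begin{equation*}
c_{V(x_0)K(x_0)}\le\lim_{n\to\infty}\tilde c_{\varepsilon_n}\le c_{V_0}.
\end{equation*}
Since $c_{V_0}$ is the ground-state energy of the $\nu=0$ problem with $\mu=V_0$ and $V(x_0)\ge V_0$, $K(x_0)\ge0$, Lemma~\ref{Lem2.7} gives $c_{V(x_0)K(x_0)}\ge c_{V_0}$, so all inequalities are equalities; the strict part of Lemma~\ref{Lem2.7} then forces $V(x_0)=V_0$ and $K(x_0)=0$, i.e.\ $x_0\in\mathcal{M}\subset\Lambda$. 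The case $x_0\in\partial\Lambda$ is excluded because $(V_2)$ gives $V(x_0)\ge\min_{\partial\Lambda}V>V_0$, which would yield $c_{V(x_0)K(x_0)}>c_{V_0}$. Equality of the energy levels also gives $\mathcal{J}_{V_0}(u)=c_{V_0}$, so $u$ is a ground state of $-\Delta u+V_0u=f(u)$.

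I expect the main obstacle to be the strong $H^1$-convergence in the third step, that is, excluding any loss of mass of $v_n$ at infinity. This is precisely where the penalization must be used: the sign condition $(g_4)$ and the choice $\kappa>V_0/V_1$ make the exterior nonlinearity subcritical relative to the potential and drive the uniform tail estimate. Once strong convergence is secured, the identification $x_0\in\mathcal{M}$ with $V(x_0)=V_0$ follows cleanly from the two-sided energy squeeze between Lemma~\ref{Lem3.7} and the strict monotonicity of Lemma~\ref{Lem2.7}.
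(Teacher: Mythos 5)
Your proposal is correct and follows essentially the same route as the paper: a nontrivial weak limit from Lemma~\ref{Lem3.4}, exclusion of escape of $\varepsilon_n y_{\varepsilon_n}$ from $\overline\Lambda$ by passing to the exterior limit equation with nonlinearity $f_*$ and testing with $u$, and then the energy squeeze $c_{V(x_0)K(x_0)}\le\liminf_n\tilde c_{\varepsilon_n}\le c_{V_0}$ combined with the monotonicity of Lemma~\ref{Lem2.7} to force $V(x_0)=V_0$ and $x_0\in\Lambda$. Two minor differences are worth recording. First, you obtain strong $H^1$-convergence via a uniform exterior tail estimate (cut-off test functions plus the absorption $(g_4)$) before identifying the limit energy, whereas the paper runs the Fatou-type argument on $\tilde\Phi_\varepsilon-\tfrac14\tilde\Phi_\varepsilon'$ first (Claim 2) and then recycles the machinery of Lemma~\ref{Lem3.5} for strong convergence (Claim 3); both are standard and interchangeable here. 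Second, in the exclusion step the paper simply asserts $V_\infty\ge V_0$ for the limit of $V(\varepsilon_n x+\varepsilon_n y_n)$ along an escaping sequence, although outside $\Lambda$ one only knows $V\ge V_1$ from $(V_1)$; your explicit normalization $\kappa>V_0/V_1$ of the penalization parameter makes $V_\infty-\tfrac{V_0}{\kappa}>0$ without that assertion, and is the cleaner way to close this step. One small point to tighten: when $x_0\in\partial\Lambda$ the limit nonlinearity is $\bar g=\xi f+(1-\xi)f_*\le f$ rather than $f$ itself, so to run your contradiction there you should, as the paper does, compare with the mountain-pass level of the $\bar g$-problem and use $G\le F$ to dominate it by $c_{V(x_0)K(x_0)}$ from below.
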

\begin{proof}
For simplicity of notations, we denote $u_{\varepsilon_n}$ and $y_{\varepsilon_n}$ by $u_n$ and $y_n$, respectively. We will divided the proof into the following three claims.\\
{\bf Claim 1:} $\{\varepsilon_n y_n\}$ is bounded.
\par
In fact, suppose to the contrary that, up to a subsequence, $|\varepsilon_n y_n|\rightarrow\infty$ as $n\rightarrow\infty$. We can assume that $V(\varepsilon_n x+\varepsilon_n y_n)\rightarrow V_\infty, K(\varepsilon_n x+\varepsilon y_n)\rightarrow K_\infty$ and $\chi(\varepsilon_n x+\varepsilon_n y_n)\rightarrow 0$ as $n\rightarrow \infty$ uniformly hold on bounded sets of $x\in \mathbb{R}^3$, where $V_\infty\geq V_0$ and $K_{\infty}\geq 0$. Similar to Lemma \ref{Lem3.2}, $\{u_n\}$ is bounded in $H^1(\mathbb{R}^3)$ and so $\{v_n\}$ is also bounded in $H^1(\mathbb{R}^3)$. Hence, there is nonnegative $u\in H^1(\mathbb{R}^3)\setminus \{0\}$ such that
\begin{equation*}
v_n\rightharpoonup u~~\text{in}~H^1(\mathbb{R}^3).
\end{equation*}
Obviously, $v_n$ satisfies
\begin{equation*}
-\Delta v_n+V(\varepsilon_n x+\varepsilon_n y_n)v_n+(\frac{1}{|x|}*K(\varepsilon_n x+\varepsilon_n y_n)v_n^2)K(\varepsilon_n x+\varepsilon_n y_n)v_n=g(\varepsilon_n x+\varepsilon_n y_n,v_n).
\end{equation*}
Thus, for any $\varphi\in C_0^\infty(\mathbb{R}^3)$, there holds
\begin{equation*}
\begin{split}
0=&\int_{\mathbb{R}^3}\big(\nabla v_n \nabla \varphi+V(\varepsilon_n x+\varepsilon_n y_n)v_n \varphi)dx+\int_{\mathbb{R}^3}
(\frac{1}{|x|}*K(\varepsilon_n x+\varepsilon_n y_n)v_n^2)K(\varepsilon_n x+\varepsilon_n y_n)v_n \varphi dx\\
&-\int_{\mathbb{R}^3}g(\varepsilon_n x+\varepsilon_n y_n,v_n)\varphi dx.
\end{split}
\end{equation*}
Since $V$ is continuous and bounded, one has
\begin{equation*}
\int_{\mathbb{R}^3}V(\varepsilon_n x+\varepsilon_n y_n)v_n \varphi dx\rightarrow V_\infty\int_{\mathbb{R}^3}u \varphi dx,~\int_{\mathbb{R}^3}g(\varepsilon_n x+\varepsilon_n y_n,v_n)\varphi dx\rightarrow \int_{\mathbb{R}^3}f_*(u)\varphi dx.
\end{equation*}
We claim that
\begin{equation*}
\int_{\mathbb{R}^3}
(\frac{1}{|x|}*K(\varepsilon_n x+\varepsilon_n y_n)v_n^2)K(\varepsilon_n x+\varepsilon_n y_n)v_n \varphi dx\rightarrow K_\infty^2\int_{\mathbb{R}^3}
\big[\frac{1}{|x|}*u^2\big]u \varphi dx.
\end{equation*}
Note that
\begin{equation*}
\begin{split}
&\int_{\mathbb{R}^3}
(\frac{1}{|x|}*K(\varepsilon_n x+\varepsilon_n y_n)v_n^2)K(\varepsilon_n x+\varepsilon_n y_n)v_n \varphi dx-K_\infty^2\int_{\mathbb{R}^3}
\big[\frac{1}{|x|}*u^2\big]u \varphi dx\\
&=\int_{\mathbb{R}^3}
(\frac{1}{|x|}*K(\varepsilon_n x+\varepsilon_n y_n)v_n^2)\big(K(\varepsilon_n x+\varepsilon_n y_n)v_n-K_\infty u\big)\varphi dx\\
&\quad+K_\infty\int_{\mathbb{R}^3}
\bigg[\frac{1}{|x|}*\big(K(\varepsilon_n x+\varepsilon_n y_n)v_n^2-K_\infty u^2\big)\bigg]u\varphi dx.
\end{split}
\end{equation*}
For the first term, by Lemma \ref{Lem2.2}, one has
\begin{equation*}
\begin{split}
&\bigg|\int_{\mathbb{R}^3}
(\frac{1}{|x|}*K(\varepsilon_n x+\varepsilon_n y_n)v_n^2)\big(K(\varepsilon_n x+\varepsilon_n y_n)v_n-K_\infty u\big)\varphi dx\bigg|\\
&\leq \|K\|_{{\infty}}\|v_n\|_{\frac{12}{5}}^2\|(K(\varepsilon_n x+\varepsilon_n y_n)v_n-K_\infty u)\varphi\|_{\frac{6}{5}}\\
&\leq C\bigg(\int_{\mathbb{R}^3}|K(\varepsilon_n x+\varepsilon_n y_n)v_n-K_\infty u|^{\frac{6}{5}}|\varphi|^{\frac{6}{5}}dx\bigg)^{\frac{6}{5}}.
\end{split}
\end{equation*}
It is easy to check that
\begin{equation*}
\bigg(\int_{\mathbb{R}^3}|K(\varepsilon_n x+\varepsilon_n y_n)v_n-K_\infty u|^{\frac{6}{5}}|\varphi|^{\frac{6}{5}}dx\bigg)^{\frac{6}{5}}\rightarrow 0.
\end{equation*}
and then
\begin{equation*}
\bigg|\int_{\mathbb{R}^3}
(\frac{1}{|x|}*K(\varepsilon_n x+\varepsilon_n y_n)v_n^2)\big(K(\varepsilon_n x+\varepsilon_n y_n)v_n-K_\infty u\big)\varphi dx\bigg|\rightarrow 0.
\end{equation*}
For the second term, since $K(\varepsilon_n x+\varepsilon_n y_n)v_n^2\rightarrow K_\infty u^2$ a.e. $x\in \mathbb{R}^3$, then
\begin{equation*}
K(\varepsilon_n x+\varepsilon_n y_n)v_n^2\rightharpoonup K_{\infty}u^2~\text{in}~L^{\frac{6}{5}}(\mathbb{R}^3).
\end{equation*}
Recall that the convolution operator
\begin{equation*}
\frac{1}{|x|}*w(x) \in L^6(\mathbb{R}^3)
\end{equation*}
for all $w\in L^{\frac{6}{5}}(\mathbb{R}^3)$ and it is a linear bounded operator from $L^{\frac{6}{5}}(\mathbb{R}^3)$ to $L^6(\mathbb{R}^3)$. Consequently
\begin{equation*}
\frac{1}{|x|}*\big(K(\varepsilon_n x+\varepsilon_n y_n)v_n^2\big)\rightharpoonup K_{\infty}\big(\frac{1}{|x|}*u^2\big)~\text{in}~L^6(\mathbb{R}^3).
\end{equation*}
So,
\begin{equation*}
\int_{\mathbb{R}^3}
\bigg[\frac{1}{|x|}*\big(K(\varepsilon_n x+\varepsilon_n y_n)v_n^2-K_\infty u^2\big)\bigg]u\varphi dx\rightarrow 0.
\end{equation*}
Hence $u$ satisfies
\begin{equation*}
-\Delta u+V_\infty u+K_{\infty}^2\big(\frac{1}{|x|}*u^2\big)u=f_*(|u|)u,~x\in \mathbb{R}^3.
\end{equation*}
Taking the scalar product of this equation with $u$, one has
\begin{equation*}
\begin{split}
0&=\int_{\mathbb{R}^3}|\nabla u|^2dx+V_\infty\int_{\mathbb{R}^3}u^2dx+K_{\infty}^2\int_{\mathbb{R}^3}\big(\frac{1}{|x|}*u^2\big)u^2dx-\int_{\mathbb{R}^3}f_*(u)udx\\
&\geq\int_{\mathbb{R}^3}|\nabla u|^2dx+V_0\int_{\mathbb{R}^3}u^2dx-\frac{V_0}{k}\int_{\mathbb{R}^3}u^2dx\\
&\geq \int_{\mathbb{R}^3}|\nabla u|^2dx+\frac{\kappa-1}{\kappa}V_0\int_{\mathbb{R}^3}u^2dx,
\end{split}
\end{equation*}
which is a contradiction with $u\neq 0$.
\par
After extracting a subsequence, we may assume $\varepsilon_n y_n\rightarrow x_0$ as $n\rightarrow \infty$. If $x_0\notin \bar{\Lambda}$, then there exists $\delta_0>0$ such that $\{\varepsilon_n y_n\}\subset \mathbb{R}^3\setminus \Lambda^{\delta_0}$ for $n$ large enough. Repeating the argument in the proof of Claim 1, we can get a same contradiction. Thus, we have that $x_0\in \bar{\Lambda}$.\\
{\bf Claim 2:}
$x_0\in \Lambda$.
\par
A similar argument as discussed in Lemma \ref{Lem3.6}, we know that $u$ is a solution of the following equation
\begin{equation*}
-\Delta u+V(x_0)u+K^2(x_0)\big(\frac{1}{|x|}*u^2\big)u=\bar{g}(x,u),~x\in \mathbb{R}^3
\end{equation*}
where
\begin{equation*}
\bar{g}(x,u):=\xi(x)f(u)+(1-\xi(x))f_*(u)
\end{equation*}
and
\begin{equation*}
\chi(\varepsilon_n x+\varepsilon_ny_n)\rightarrow \xi(x)~a.e.~\text{in}~\mathbb{R}^3,
\end{equation*}
with energy
\begin{equation*}
\check{\Phi}_{x_0}(u)=\frac{1}{2}\int_{\mathbb{R}^3}|\nabla u|^2dx+\frac{V(x_0)}{2}\int_{\mathbb{R}^3}u^2dx+\frac{K^2(x_0)}{4}\int_{\mathbb{R}^3}\big(\frac{1}{|x|}*u^2\big)u^2dx
-\int_{\mathbb{R}^3}G(x,u)dx,
\end{equation*}
and the corresponding mountain pass level by $\check{c}_{x_0}$. Note that, $G(x, t)\leq F(t)$ for all $x\in \mathbb{R}^3, t\geq 0$, which gives then $c_{x_0}\leq \check{c}_{x_0}$, where $c_{x_0}$ denotes the mountain pass level associated with $\check{J}_{x_0}$, where $\check{J}_{x_0}:E\rightarrow \mathbb{R}$ is given by
\begin{equation*}
\check{J}_{x_0}(u)=\frac{1}{2}\int_{\mathbb{R}^3}|\nabla u|^2dx+\frac{V(x_0)}{2}\int_{\mathbb{R}^3}u^2dx+\frac{K^2(x_0)}{4}\int_{\mathbb{R}^3}\big(\frac{1}{|x|}*u^2\big)u^2dx
-\int_{\mathbb{R}^3}F(u)dx.
\end{equation*}
Then,
\begin{equation*}
\begin{split}
c_{x_0}&\leq \check{c}_{x_0}\leq \check{\Phi}_{x_0}(u)= \check{\Phi}_{x_0}(u)-\frac{1}{4}\check{\Phi}_{x_0}^\prime(u)u\\
&=\frac{1}{4}\int_{\mathbb{R}^3}| \nabla u|^2dx+\frac{1}{4}(1-\frac{1}{\kappa})\int_{\mathbb{R}^3} V(x_0)u^2dx+\int_{\mathbb{R}^3}\bigg(\frac{1}{4\kappa}V(x_0)u^2+\frac{1}{4}\check{g}(x,u)u-\check{G}(x,u)\bigg)dx\\
&\leq\liminf_{n\rightarrow\infty}\bigg[\frac{1}{4}\int_{\mathbb{R}^3}| \nabla v_n|^2dx +\frac{1}{4}(1-\frac{1}{\kappa})\int_{\mathbb{R}^3}V(\varepsilon_n x+\varepsilon_ny_n)v_n^2dx\\
&\quad+\int_{\mathbb{R}^3}\bigg(\frac{1}{4\kappa}V(\varepsilon_n x+\varepsilon_ny_n)v_n^2+\frac{1}{4}g(\varepsilon_n x+\varepsilon_ny_n,v_n)v_n-G(\varepsilon_n x+\varepsilon_ny_n,v_n)\bigg)dx\bigg]\\
&=\liminf_{n\rightarrow\infty} c_{\varepsilon_n}\leq c_{V_0}.
\end{split}
\end{equation*}
Therefore, $c_{x_0}\leq c_{V_0}$, which implies that $V(x_0)\leq V(0)=V_0$. Then, by it follows that $x_0\notin \partial \Lambda$ and so $x_0\in \Lambda$, proving the claim.\\
{\bf Claim 3:}
$v_n\rightarrow u$ in $H^1(\mathbb{R}^3)$ as $n\rightarrow \infty$.
\par
From Lemma \ref{Lem3.5}, it is easy to see that
\begin{equation*}
\lim_{n\rightarrow\infty}\int_{\mathbb{R}^3}| \nabla u_n|^2dx=\int_{\mathbb{R}^3}| \nabla u|^2dx
\end{equation*}
and
\begin{equation*}
\lim_{n\rightarrow\infty}\int_{\mathbb{R}^3}V(\varepsilon_n x+\varepsilon_ny_n)v_n^2dx=V(x_0)\int_{\mathbb{R}^3}u^2dx.
\end{equation*}
The last limit gives
\begin{equation*}
\lim_{n\rightarrow\infty}\int_{\mathbb{R}^3}v_n^2dx=\int_{\mathbb{R}^3}u^2dx.
\end{equation*}
Thus,
\begin{equation*}
\lim_{n\rightarrow\infty}\big(\int_{\mathbb{R}^3}|\nabla v_n|^2dx+\int_{\mathbb{R}^3}v_n^2dx\big)=\int_{\mathbb{R}^3}| \nabla u|^2dx+\int_{\mathbb{R}^3}u^2dx.
\end{equation*}
Together with $v_n\rightharpoonup u$ in $H^1(\mathbb{R}^3)$, we have $v_n\rightarrow u$ in $H^1(\mathbb{R}^3)$.
\end{proof}
\par
The following lemma plays a fundamental role in the study of behavior of the maximum points of the
solutions. We omit its proof, which are related to the Moser iterative method \cite{He2011zamp,YangNA20}.
\begin{Lem}\label{Lem3.8}
There exists $C>0$ independent of $n$ such that $\|v_n\|_\infty\leq C$ and
\begin{equation*}
\lim_{|x|\rightarrow\infty}v_n(x)=0~\text{uniformly~in}~n \in \mathbb{N}.
\end{equation*}
Furthermore, there exist $c_1,c_2>0$ such that
\begin{equation*}
v_n(x)\leq c_1e^{-c_2|x|},~\forall x\in \mathbb{R}^3.
\end{equation*}
\end{Lem}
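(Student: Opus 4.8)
The plan is to establish the three assertions in sequence: first a uniform $L^\infty$-bound via a Moser iteration, then the uniform decay at infinity, and finally the exponential decay through a comparison (barrier) argument. Writing $V_n(x):=V(\varepsilon_n x+\varepsilon_n y_n)$, $K_n(x):=K(\varepsilon_n x+\varepsilon_n y_n)$ and $\phi_n:=\frac{1}{|x|}*(K_n v_n^2)$, the function $v_n\geq 0$ solves
\begin{equation*}
-\Delta v_n+V_n v_n+\phi_n K_n v_n=g(\varepsilon_n x+\varepsilon_n y_n,v_n)\quad\text{in }\mathbb{R}^3.
\end{equation*}
The key structural observation is that $\phi_n\geq 0$, $K_n\geq 0$ and $V_n\geq V_1>0$, so the nonlocal term lies on the favorable side and may simply be discarded in every upper estimate. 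Using $g(x,t)\leq f(t)\leq \epsilon t+C_\epsilon t^{p-1}$ from Remark \ref{Rek1.1} (valid since $f_*\leq f$) and absorbing $\epsilon v_n$ by choosing $\epsilon=V_1/2$, one obtains the pointwise differential inequality
\begin{equation*}
-\Delta v_n+\tfrac{V_1}{2}v_n\leq C_\epsilon\, v_n^{p-1}\quad\text{in }\mathbb{R}^3,
\end{equation*}
which serves as the starting point for all three steps.

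For the uniform $L^\infty$-bound I would run a standard Moser iteration on this subsolution inequality. Testing with a truncated power $v_n\,\eta^2 v_{n,L}^{2(\beta-1)}$ (where $v_{n,L}=\min\{v_n,L\}$) and applying the Sobolev embedding $H^1(\mathbb{R}^3)\hookrightarrow L^6(\mathbb{R}^3)$ to $v_n^\beta$ produces an estimate that upgrades the integrability of $v_n$ from $L^{2\beta}$ to $L^{6\beta}$ with a controlled constant. Since $3<p-1<5$ the exponent is strictly subcritical, so the iteration $\beta\mapsto 3\beta$ converges and the resulting bound depends only on the uniform $H^1$-bound of $\{v_n\}$ (which holds because $v_n\to u$ in $H^1$). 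This yields $\|v_n\|_\infty\leq C$ with $C$ independent of $n$.

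For the uniform decay I would combine a local sup estimate with the strong convergence $v_n\to u$ in $L^2(\mathbb{R}^3)$. Having $\|v_n\|_\infty\leq C$, the term $C_\epsilon v_n^{p-1}=C_\epsilon v_n^{p-2}\cdot v_n$ is treated as a bounded zeroth-order coefficient, so a De Giorgi--Nash--Moser local boundedness estimate gives $\sup_{B_1(z)}v_n\leq C\big(\int_{B_2(z)}v_n^2\,dx\big)^{1/2}$ with $C$ independent of $z$ and $n$. Since $v_n\to u$ in $L^2$ and $u\in L^2(\mathbb{R}^3)$, the quantity $\int_{B_2(z)}v_n^2\,dx$ tends to $0$ as $|z|\to\infty$ uniformly in $n$ (split the indices into the tail controlled by $\|v_n-u\|_2$ plus finitely many remaining $v_n$, each individually $L^2$-small far out), whence $v_n(x)\to 0$ as $|x|\to\infty$ uniformly in $n$.

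Finally, the exponential decay follows from a comparison argument. By the uniform decay there is $R_0>0$, independent of $n$, with $C_\epsilon v_n^{p-2}(x)\leq V_1/4$ for $|x|\geq R_0$; inserting this into the differential inequality gives $-\Delta v_n+\frac{V_1}{4}v_n\leq 0$ on $\{|x|\geq R_0\}$. Taking the barrier $w(x)=M e^{-c(|x|-R_0)}$ with $M:=\sup_n\|v_n\|_\infty$ and $0<c\leq \sqrt{V_1}/2$, a direct computation shows $-\Delta w+\frac{V_1}{4}w\geq 0$ on $\{|x|\geq R_0\}$, while $w\geq v_n$ on $\{|x|=R_0\}$ and both vanish at infinity; the maximum principle on the exterior domain then yields $v_n(x)\leq M e^{cR_0}e^{-c|x|}$, i.e. $v_n(x)\leq c_1 e^{-c_2|x|}$ with $c_1,c_2$ independent of $n$. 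The main technical obstacle is the Moser iteration: one must check that the iteration constants can be taken uniform in $n$ and that the (nonnegative, hence discardable) nonlocal term causes no interference, after which the two decay steps are essentially comparison-principle arguments.
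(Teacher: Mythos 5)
Your proposal is correct and follows exactly the route the paper indicates: the authors omit the proof but state it rests on the Moser iterative method (citing \cite{He2011zamp,YangNA20}), which is precisely your combination of a uniform-in-$n$ Moser iteration for the $L^\infty$ bound (legitimately discarding the nonnegative nonlocal term), local sup-estimates plus $H^1$-convergence for the uniform decay, and an exterior barrier for the exponential bound.
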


\section{Proof of main results}
In this section we will prove our main result. The idea is to show that the solutions obtained in Lemma \ref{Lem3.6} verify the following estimate $u_\varepsilon(x)\leq a $, $\forall x\in\Lambda^c_\varepsilon$ for $\varepsilon$ small enough. This fact implies that these solutions are in fact solutions of the original problem \eqref{eq2.1}.
\begin{Lem}\label{Lem4.1}
There exists $n_0\in \mathbb{N}$ such that
\begin{equation*}
u_n\leq a,~\forall n\geq n_0~\text{and}~x\in \Lambda_{\varepsilon_n}^c.
\end{equation*}
Hence, $u_n$ is a solution of \eqref{eq2.1} with $\varepsilon=\varepsilon_n$ for $n\geq n_0$.
\end{Lem}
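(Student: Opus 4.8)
The plan is to combine the uniform decay estimate for the rescaled solutions with the fact that the concentration points $\varepsilon_n y_{\varepsilon_n}$ converge to an \emph{interior} point $x_0 \in \Lambda$. The underlying mechanism is standard for the del Pino--Felmer penalization scheme: once $u_n$ is uniformly small outside a fixed ball around its concentration point $y_{\varepsilon_n}$, and $\Lambda_{\varepsilon_n}^c$ is shown to lie entirely outside that ball, the truncated nonlinearity $g$ coincides with $f$ on the relevant region, so that a solution of the modified problem \eqref{eq3.3} is in fact a solution of \eqref{eq2.1}.

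First I would invoke the uniform decay from Lemma \ref{Lem3.8}: since $v_n(x)\to 0$ as $|x|\to\infty$ uniformly in $n$, there is an $R>0$, \emph{independent of $n$}, with
\[
v_n(x)\leq a \quad\text{for all } |x|\geq R \text{ and all } n.
\]
Recalling $u_n(x)=v_n(x-y_{\varepsilon_n})$, this gives $u_n(x)\leq a$ whenever $|x-y_{\varepsilon_n}|\geq R$. Next comes the localization step. Since $x_0\in\Lambda$ and $\Lambda$ is open, fix $\delta>0$ with $B_{2\delta}(x_0)\subset\Lambda$. Because $\varepsilon_n y_{\varepsilon_n}\to x_0$ and $\varepsilon_n\to 0$, for all $n$ large we have $|\varepsilon_n y_{\varepsilon_n}-x_0|<\delta$ and $\varepsilon_n R<\delta$. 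Then for any $x$ with $|x-y_{\varepsilon_n}|<R$,
\[
|\varepsilon_n x-x_0|\leq \varepsilon_n|x-y_{\varepsilon_n}|+|\varepsilon_n y_{\varepsilon_n}-x_0|<\varepsilon_n R+\delta<2\delta,
\]
so $\varepsilon_n x\in\Lambda$, i.e. $x\in\Lambda_{\varepsilon_n}$. Equivalently $B_R(y_{\varepsilon_n})\subset\Lambda_{\varepsilon_n}$, hence $\Lambda_{\varepsilon_n}^c\subset\{\,|x-y_{\varepsilon_n}|\geq R\,\}$. Combining with the first step yields, for all $n\geq n_0$ (with $n_0$ absorbing the two ``$n$ large'' conditions) and every $x\in\Lambda_{\varepsilon_n}^c$, the desired bound $u_n(x)\leq a$.

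To finish, I would appeal to the observation following \eqref{eq3.3}: on $\Lambda_{\varepsilon_n}^c$ one has $\varepsilon_n x\notin\Lambda$, so $g(\varepsilon_n x,t)=f_*(t)$, and since $f_*(t)=f(t)$ for $t\leq a$, the bound $u_n\leq a$ on $\Lambda_{\varepsilon_n}^c$ forces $g(\varepsilon_n x,u_n)=f(u_n)$ everywhere. Thus $u_n$ solves \eqref{eq2.1} with $\varepsilon=\varepsilon_n$ for $n\geq n_0$.

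The key conceptual points — rather than genuine obstacles, since the heavy lifting is delegated elsewhere — are the two uniformity ingredients. First, the decay radius $R$ must be chosen independent of $n$; this is exactly the content of Lemma \ref{Lem3.8}, obtained via the Moser iteration whose proof is omitted. Second, the inclusion $B_R(y_{\varepsilon_n})\subset\Lambda_{\varepsilon_n}$ relies crucially on $x_0$ being an interior point of $\Lambda$, not merely a boundary point: this is guaranteed by Claim~2 of the convergence lemma through the strict inequality $V(x_0)\leq V_0<\min_{\partial\Lambda}V$. Were $x_0\in\partial\Lambda$ admissible, the containment could fail and the whole reduction to the original problem would collapse.
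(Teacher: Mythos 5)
Your proof is correct and follows essentially the same route as the paper: both use the uniform decay of $v_n$ from Lemma \ref{Lem3.8} to get $u_n\leq a$ outside $B_R(y_{\varepsilon_n})$, and the interior convergence $\varepsilon_n y_{\varepsilon_n}\to x_0\in\Lambda$ to show $\Lambda_{\varepsilon_n}^c$ eventually lies outside that ball. Your version is, if anything, slightly more explicit in the localization step and in spelling out why $g=f$ on the resulting region.
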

\begin{proof}
By Lemma \ref{Lem3.8}, we obtain that $\varepsilon_n y_n\rightarrow x_0\in \Lambda$. Thus, up to a subsequence, there exists $r>0$ such that
\begin{equation*}
B(\varepsilon_n y_n,r)\subset \Lambda,~\forall n\in \mathbb{N}.
\end{equation*}
Hence,
\begin{equation*}
B(y_n,\frac{r}{\varepsilon_n })\subset \Lambda_{\varepsilon_n},~\forall n\in \mathbb{N},
\end{equation*}
which implies that
\begin{equation*}
\Lambda_{\varepsilon_n}^c\subset \mathbb{R}^3\setminus B(y_n,\frac{r}{\varepsilon_n }),~\forall n\in \mathbb{N}.
\end{equation*}
Next, by Lemma \ref{Lem3.8}, there exists $R>0$ such that
\begin{equation*}
v_n(x)\leq a,~\text{for}~|x|\geq R~\text{and}~\forall n\geq n_0,
\end{equation*}
which means that
\begin{equation*}
u_n(x)=v_n(x-y_n)\leq a,~x\in \mathbb{R}^3\setminus B(y_n,R)~\text{and}~\forall n\geq n_0.
\end{equation*}
On the other hand, there is some $n_0\in \mathbb{N}$ such that
\begin{equation*}
\Lambda_{\varepsilon_n}^c\subset \mathbb{R}^3\setminus B(y_n,\frac{r}{\varepsilon_n })\subset \mathbb{R}^3\setminus B(y_{\varepsilon_n},R),~\forall n\geq n_0.
\end{equation*}
Therefore,
\begin{equation*}
u_n\leq a,~\forall n\geq n_0~\text{and}~x\in \Lambda_{\varepsilon_n}^c.
\end{equation*}
This completes the proof.
\end{proof}

\textbf{Proof of Theorem \ref{Thm1.1}.} By Lemma \ref{Lem3.6}, the problem \eqref{eq3.3} has a ground state solution $u_\varepsilon$ for all $\varepsilon>0$. From Lemma \ref{Lem4.1}, there exists $\varepsilon_0>0$ such that
\begin{equation*}
u_\varepsilon\leq a,~\forall x\in \mathbb{R}^3\setminus\Lambda_\varepsilon~\text{and}~\forall \varepsilon\in (0,\varepsilon_0).
\end{equation*}
that is, $u_\varepsilon$ is a solution of \eqref{eq2.1} for $\varepsilon\in (0,\varepsilon_0)$. Considering
\begin{equation*}
\omega_\varepsilon(x)=u_\varepsilon(\frac{x}{\varepsilon}),~\forall \varepsilon\in (0,\varepsilon_0).
\end{equation*}
Then $\omega_\varepsilon$ is a solution of the original system \eqref{eq1.1}.
\par
Now, we claim that there exists a $\rho_0 > 0$ such that $\|v_n\|_\infty\geq \rho_0,\forall n\in \mathbb{N}$. In fact, suppose that $\|v_n\|\rightarrow 0$ as $n \rightarrow\infty $. Then there exists $n_0\in \mathbb{N}$ such that
\begin{equation*}
g(\varepsilon_n x+\varepsilon_n y_n,v_n)\leq \frac{V_0}{2}v_n~\text{for}~n\geq n_0.
\end{equation*}
Therefore, we have
\begin{equation*}
\begin{split}
\int_{\mathbb{R}^3}|\nabla v_n|^2dx+V_0\int_{\mathbb{R}^3}v_n^2dx&\leq\int_{\mathbb{R}^3}|\nabla v_n|^2dx+\int_{\mathbb{R}^3}V(\varepsilon_n x+\varepsilon_ny_n)v_n^2dx\\
&\quad+\int_{\mathbb{R}^3}
(\frac{1}{|x|}*K(\varepsilon_n x+\varepsilon_n y_n)v_n^2)K(\varepsilon_n x+\varepsilon_n y_n)v_n^2dx\\
&=\int_{\mathbb{R}^3}g(\varepsilon_n x+\varepsilon_n y_n)v_ndx\\
&\leq \frac{V_0}{2}\int_{\mathbb{R}^3}v_n^2dx.
\end{split}
\end{equation*}
This implies that $\|v_n\|_{V_0}=0$ for $n \geq n_0$, which is impossible because $v_n\rightarrow u$ in $H^1(\mathbb{R}^3)$ and $u\neq 0$. Then, the claim is true.
\par
From the above claim, we see that $v_n$ has a global maximum points $p_n\in B_{R_0}(0)$ for some $R_0>0$. Hence, the global maximum points of $\omega_n$ given by $x_n:=\varepsilon_n(p_n+ y_n)$. Since $p_n\in B_{R_0}(0)$ is bounded, then we know that $\{x_n\}$ is bounded and $x_n\rightarrow x_0 \in \Lambda$, that is
\begin{equation*}
\lim_{n\rightarrow\infty}V(x_n)=V(x_0).
\end{equation*}
Furthermore, similar to the argument of Lemma \ref{Lem3.8}, $\tilde{v}_n(x):=\omega_n(\varepsilon_n x+\varepsilon_n x_n)$ converges to a positive ground state solution of
\begin{equation*}
-\Delta u+V_0u=f(u).
\end{equation*}
At last, by Lemma \ref{Lem3.8}, one has
\begin{equation*}
\begin{split}
\omega_n(x)=v_n(\frac{x}{\varepsilon_n}-y_n)
\leq Ce^{-c|\frac{x-\varepsilon_n y_n}{\varepsilon_n}|}\leq Ce^{-c|\frac{x-\varepsilon_n y_n-\varepsilon_n p_n}{\varepsilon_n}|}=Ce^{-\frac{c}{\varepsilon_n}|x-x_n|}.
\end{split}
\end{equation*}
Thus, the proof of Theorem \ref{Thm1.1} is completed.

\section{Some open questions}
We close this paper by listing several problems that are left open in this direction.\\

{\bf Problem 1: Uniqueness and Non-degeneracy}
\par
If we set $f(u)=|u|^{p-2}u$ in our problem \eqref{eq1.1} with $4<p<6$, we have
\begin{equation*}
\begin{cases}
-\varepsilon^2\Delta u+V(x)u+K(x)\phi u=|u|^{p-2}u&\text{in}\ \mathbb{R}^3,\\
-\varepsilon^2\Delta \phi=K(x)u^2&\text{in}\ \mathbb{R}^3,
\end{cases}
\end{equation*}
which has a ground state solution converging to the uniqueness ground state solution of
\begin{equation}\label{eq5.1}
-\Delta u+V_0u=|u|^{p-2}u\quad \text{in}~\mathbb{R}^3.
\end{equation}
For \eqref{eq5.1}, uniqueness and non-degeneracy has a long history and has been addressed by many authors, see \cite{Lin91CPDE,Coffman72ARMA,Kwong89ARMA}. Then we want to know whether the Schr\"{o}dinger-Poisson system also has similar results.\\

{\bf Problem 2: Concentration phenomenon for local competing potential functions}
\par
In this problem we want to consider the following problem:
\begin{equation*}
\begin{cases}
-\varepsilon^2\Delta u+V(x)u+\phi u=Q(x)f(u)&\text{in}\ \mathbb{R}^3,\\
-\varepsilon^2\Delta \phi=u^2&\text{in}\ \mathbb{R}^3.
\end{cases}
\end{equation*}
The question is about the concentration phenomenon when $V(x)$ has local minimum and $Q(x)$ has a local maximum:
\begin{itemize}
\item[$(a)$] $V,Q\in C(\R^3,\R)$ and $\inf\limits_{x\in\R^3}V(x)=V_1>0$, $\inf\limits_{x\in\R^3}Q(x)=Q_1>0$.
\item[$(b)$] There is a open and bounded domain $\Lambda$ such that
\begin{equation*}
V(x_{min}):=\inf_{\Lambda}V(x)<\min_{\partial\Lambda}V(x),\quad Q(x_{max})=\sup_{\Lambda}Q(x)>\max_{\partial\Lambda}Q(x).
\end{equation*}
\end{itemize}
In this case, $V$ and $Q$ all want to attract solutions to their local extreme point, the we call it local competing potential functions. In fact, if the condition is global, there have been some results for Schr\"{o}dinger-Poisson system \cite{Wang13CV,Wang15ZAMP,YangNA20} and fractional Schr\"{o}dinger-Poisson system \cite{Yang19MN,Yu17CV}.\\

{\bf Problem 3: Double-critical case}
\par
For the double critical Schr\"{o}dinger-Poisson system
\begin{equation*}
\begin{cases}
-\Delta u+V(x)u+K(x)\phi|u|^3u=|u|^{4}u&\text{in}\ \mathbb{R}^3,\\
-\Delta \phi=K(x)|u|^5&\text{in}\ \mathbb{R}^3,
\end{cases}
\end{equation*}
or more general
\begin{equation}\label{eq5.2}
-\Delta u+V(x)u+(I_\alpha*K(x)|u|^{p})K(x)\phi|u|^{p-2}u=|u|^{q-2}u\quad\text{in}\ \mathbb{R}^N,
\end{equation}
where $1<p\leq\frac{N+\alpha}{N-2}, 2<q\leq2^*=\frac{2N}{N-2}$ and $I_{\alpha}: \mathbb{R}^{N} \rightarrow \mathbb{R}$ is the Riesz potential of order $\alpha \in(0, N)$ defined for $x \in \mathbb{R}^{N} \backslash\{0\}$ as
\[
I_{\alpha}(x)=\frac{A_{\alpha}}{|x|^{N-\alpha}}, \quad A_{\alpha}=\frac{\Gamma\left(\frac{N-\alpha}{2}\right)}{\Gamma\left(\frac{\alpha}{2}\right) \pi^{N / 2} 2^{\alpha}}.
\]
Because for $\alpha \in(0, N)$ the Riesz potential $I_{\alpha}$ is the Green function of the fractional Laplacian $(-\Delta)^{\alpha / 2}$, then the system
\[
\left\{\begin{array}{l}
-\Delta u+\phi|u|^{p-2} u=|u|^{q-2} u, \\
(-\Delta)^{\alpha / 2} \phi=u^{p},
\end{array}\right.
\]
is formally equivalent to equation \eqref{eq5.2}. Therefore, we want to know the existence, multiplicity and different asymptotic behavior depending on $\alpha\rightarrow0$.

\section*{Acknowledgments}
We would like to thank the anonymous referee for his/her careful readings of our manuscript and the useful comments.\\
{\bf Data availability:}
Data sharing not applicable to this article as no datasets were generated or analysed during the current study.

\bibliographystyle{plain}
\bibliography{Yang-Yu}

\end{document}